\numberwithin{equation}{section}
\renewcommand\d{\partial}
\renewcommand\a{\alpha}
\renewcommand\b{\beta}
\newcommand\N{\mathbb N}\newcommand\Z{\mathbb Z}
\def\g{\gamma}
\def\de{\delta}
\def\O{\Omega}
\def\th{\theta}
\def\vp{\varphi}
\def\epsilon{\varepsilon}
\def\e{\varepsilon}
\newcommand\br{\begin{rem}}
\newcommand\er{\end{rem}}
\newcommand\bp{\begin{pmatrix}}
\newcommand\ep{\end{pmatrix}}
\newcommand\be{\begin{equation}}
\newcommand\ee{\end{equation}}
\newcommand\ba{\begin{equation}\begin{aligned}}
\newcommand\ea{\end{aligned}\end{equation}}
\newcommand\nn{\nonumber}
\newcommand{\CalB}{\mathcal{B}}
\newcommand{\II}{{\mathbb I}}
\newcommand{\SSS}{{\mathbb S}}
\newcommand{\supp}{{\rm supp }}
\newcommand{\vv}{{\mathbf v}}
\newcommand{\vr}{\varrho}
\newcommand{\vu}{\vc{u}}
\newcommand{\vf}{\vc{f}}
\newcommand{\vg}{\vc{g}}
\newcommand{\vc}[1]{{\bf #1}}
\newcommand{\Div}{{\rm div\,}}
\newcommand{\Grad}{\nabla_x}
\newcommand{\dx}{{\rm d} {x}}
\newcommand{\intO}[1]{\int_{\O} #1 \ \dx}
\newcommand{\intOe}[1]{\int_{\O_\e} #1 \ \dx}
\newcommand{\dive}{{\rm div\,}}
\newtheorem{defi}{Definition}[section]
\newtheorem{theorem}[defi]{Theorem}
\newtheorem{lemma}[defi]{Lemma}
\newtheorem{remark}[defi]{Remark}
\numberwithin{equation}{section}
\begin{document}

\title{The inverse of the divergence operator on perforated domains with applications
to homogenization problems for the compressible Navier-Stokes system}

\author{Lars Diening\footnote{Institute of Mathematics, Universit\"at Osnabr\"uck, Albrechtstr. 28a, 49076 Osnabr\"uck, Germany, {\tt lars.diening@uni-osnabrueck.de}} \and  Eduard Feireisl\footnote{Institute of Mathematics of the Academy of Sciences of the Czech Republic, Zitn\' a 25, 115 67 Praha 1, Czech Republic, {\tt feireisl@math.cas.cz}} \and  Yong Lu\footnote{Mathematical Institute, Faculty of Mathematics and Physics, Charles University, Sokolovsk\'a 83, 186 75 Praha, Czech Republic, {\tt luyong@karlin.mff.cuni.cz}}}

\date{}

\maketitle

\begin{abstract}

We study the inverse of the divergence operator on a domain $\Omega \subset R^3$ perforated by a system of tiny holes. We show that such inverse can be constructed on the Lebesgue space $L^p(\Omega)$ for any $1< p < 3$, with a norm independent of perforation, provided the holes are suitably small and their mutual distance suitably large. Applications are given to problems arising in homogenization of steady compressible fluid flows.

\end{abstract}


\renewcommand{\refname}{References}


\section{Introduction}

Homogenization in fluid mechanics gives rise to system of partial differential equations considered on physical domains perforated by
a large number of tiny holes. The typical diameter and mutual distance of these holes play a crucial role in the asymptotic behavior of fluid flows
in the regime where the number of holes tends to infinity and their size tends to zero.

 Viscous fluid flows passing an array of fixed solid obstacles is a situation frequently occurring in
applications. A priori, the Navier-Stokes equations with a no-slip boundary condition on the obstacles are
believed to be the correct model. With an increasing number of holes, the fluid flow approaches an
effective state governed by certain "homogenized" equations which are homogeneous in form (without obstacles).  We refer to \cite{book-hom} for a number of real world applications.

 The problem is relatively well understood in the framework of \emph{stationary, viscous} fluid flows represented by the the standard Stokes and/or
Navier-Stokes system of equations. Allaire \cite{ALL-NS1}, \cite{ALL-NS2} (see also earlier results by Tartar \cite{Tartar1}) identified three different scenarios for the case of periodically distributed
holes:
\begin{itemize}
\item  the supercritical size of holes for which the asymptotic limit is Darcy's law;
\item  the critical size giving rise to Brinkman's law;
\item the subcritical size of holes has no influence on the motion in the asymptotic limit - the limit problem coincides with the original one.

\end{itemize}

Related results for the evolutionary (time-dependent) incompressible Navire-Stokes system were obtained by Mikeli\'{c} \cite{Mik} and, more recently, in
\cite{FeNaNe}.

Considerably less is known in the case of \emph{compressible} fluids. Masmoudi \cite{Mas-Hom} identified rigorously the porous medium equation (Darcy's law)  as a homogenization
limit for the evolutionary barotropic (compressible) Navier-Stokes system in the case where the diameter of the holes is comparable to their mutual distance, which is a subcase of the supercritical case, similar results for the full Navier-Stokes-Fourier system were obtained in \cite{FNT-Hom}.

In \cite{FL1}, we considered the compressible (isentropic) stationary Navier-Stokes system in the subcritical regime, where the spatial domain is perforated by a periodic lattice of holes of subcritical size. Similarly to the incompressible case, we showed that the motion is not affected by the obstacles and the limit problem coincides with the original one. The result was conditioned by two basic hypotheses:
\begin{itemize}
\item
the isentropic pressure-density state relation
\begin{equation}
\label{H1}
p(\vr) = a \vr^\gamma, \ a > 0, \ \gamma \geq 3;
\end{equation}
\item
periodic distribution of the holes.

\end{itemize}

Note that hypothesis (\ref{H1}) was also used by Masmoudi \cite{Mas-Hom} in the evolutionary case. In the stationary regime considered in \cite{FL1}, the
assumption (\ref{H1}) plays a crucial role as it renders the pressure square-integrable. Accordingly, the well developed Hilbertian $L^2-$theory can be used to handle the problem, in particular, the restriction operator introduced by Tartar \cite{Tartar1} can be used in a compatible way to construct the inverse of the
divergence - the so-called Bogovskii's operator (see Bogovskii \cite{bog}, Galdi \cite[Chapter 3]{Galdi}).

Our goal in the present paper is to extend the results of \cite{FL1}  to the case:

\begin{itemize}
\item
the isentropic pressure-density state relation with lower adiabatic number
\begin{equation}
\label{H2}
p(\vr) = a \vr^\gamma, \ a > 0, \ \gamma > 2;
\end{equation}
\item general distribution of the holes, only conditions on the diameter, shape, and mutual distance prescribed.
\end{itemize}

While considering a general distribution of holes represents only an incremental improvement with respect to \cite{FL1}, the seemingly easier step from (\ref{H1}) to (\ref{H2})
requires more effort. The reason is that the pressure $p$ is no longer (known to be) square integrable, and, consequently, the $L^2-$theory cannot be used
in order to obtain the necessary uniform bounds on the solutions. In particular, the inverse of the divergence operator used in \cite[Section 2.1]{FL1},
based on the standard Bogovskii's construction acting between the spaces $L^2_0(\O_\e)$ and $W_0^{1,2}(\O_\e; R^3)$ combined with Tartar's restriction
operator, is no longer applicable and must be replaced by its $L^p-$analogue for a general $1 < p < 3$. The construction of such an operator may be seen
as the main novelty of the present paper in comparison with \cite{FL1} and may be of independent interest.

The paper is organized as follows. In Section \ref{P}, we collect the necessary preliminary materials, formulate the problem, and state our main result.
Section \ref{sec:bog-proof} is the heart of the paper. Here we construct the inverse of the divergence -- a Bogovskii's type operator -- enjoying the desired properties. The uniform estimates obtained via this operator are then used in Section \ref{sec:pf-hom} to identify the asymptotic limit for the Navier-Stokes system in perforated domains.

\section{Preliminaries, problem formulation, main result}
\label{P}

In what follows, we denote by $W^{-1,q}(\O)$ the dual space to the the Sobolev spaces $W_0^{1,q'}(\O)$, where
\[
\frac{1}{q}+ \frac{1}{q'}=1,
\]
with the standard norm
\be\label{def-W-1p}
\|u\|_{W^{-1,q}(\O)}:=\sup_{\phi \in C_c^\infty(\O),\,\|\phi\|_{W^{1,q'}_0} \leq 1} \left|\int_{\O} u \phi \ {\rm d}x \right|.
\ee
The symbol $L^{q}_0(\Omega)$ denotes the space of functions in $L^q(\Omega)$ with zero integral mean:
\be\label{Lp0}
L^q_0(\Omega):=\left\{f\in L^q(\Omega): \, \int_{\O} f\,\dx=0\right\}.
\ee

\subsection{Perforated domain}\label{sec:domain}

Consider a bounded domain $\Omega \subset R^3$ of class $C^2$. We introduce a family of \emph{perforated domains}
$\{ \Omega_\e \}_{\e > 0}$,
\be\label{domain}
\Omega_\e = \Omega \setminus \bigcup_{n=1}^{N(\e)} T_{\e, n},
\ee
where the sets $T_{\e,n}$ represent \emph{holes} or \emph{obstacles}.  We suppose the following property concerning the distribution of the holes:
 \be\label{dis-holes}
T_{\e,n} = x_{\e,n} + \e^\a T_n \subset B(x_{\e,n}, \de_0 \e^\alpha ) \subset B(x_{\e,n}, \delta_1 \e) \subset B(x_{\e,n}, \de_2\e)\subset \O,
\ee
where for each $n$,  $T_n\subset R^3$ is a simply connected bounded domain of class $C^2$ and is independent of $\e$, $B(x,r)$ denotes the open ball centered at $x$ with radius $r$ in $R^3$, $\de_0, \ \de_1, \ \de_2$ are positive constants independent of $\e$ and there holds $\de_1<\de_2$.  Moreover, we suppose balls (control volumes) in $\{B(x_{\e,n}, \de_2\e)\}_{n\in \N}$ are pairwise disjoint.

Compared to the assumption on the distribution of holes in \cite{FL1}, here we do not assume the periodicity of the distribution, and we do not assume the uniform shape of the holes.

The diameter of each $T_{\e,n}$ is of order
$O(\e^\alpha)$ and their mutual distance  is $O(\e)$, while their total number $N(\e)$ can be estimated as
\be\label{number-holes}
N_{\e} \leq \frac{3}{4 \pi} \frac{|\Omega|}{\e^3}.\nn
\ee

\subsection{Stationary Navier-Stokes equations}

For the fluid density $\vr = \vr(x)$ and the velocity field $\vu = \vu(x)$,
we consider the stationary (compressible) \emph{Navier-Stokes system}
\be\label{i1}
\dive (\vr \vu) = 0,
\ee
\be\label{i2}
 \dive (\vr \vu \otimes \vu) +\nabla p(\vr) =
\dive \SSS(\nabla \vu)+\vr \vf+\vg,
\ee
\be\label{i3}
\SSS (\nabla \vu) = \mu \left( \nabla \vu + \nabla^t \vu - \frac{2}{3} (\dive \vu) \II \right) + \eta (\dive \vu) \II,\ \mu > 0,\ \eta \geq 0,
\ee
in the spatial domain $\O_\e$, supplemented with the standard no-slip boundary condition
\be\label{i4}
\vu=0\quad \mbox{on}\ \partial\Omega_\e.
\ee

The symbol $\SSS(\nabla \vu)$
stands for the Newtonian viscous stress tensor with constant viscosity coefficients $\mu, \eta$. For the sake of simplicity, we focus on the
\emph{isentropic} pressure-density state equation
\be\label{pp1}
p(\vr)=a\vr^\gamma,\ a > 0,
\ee
with the adiabatic exponent $\g $, the value of which will be specified below.

The motion is driven by the volume force $\vf$ and nonvolume force $\vg$, defined on the whole domain $\Omega$ and independent of $\e$,
that are supposed, again for the sake of simplicity, to be uniformly bounded,
\be\label{ex-force}
\|\vf\|_{L^\infty(\O; R^3)}+\|\vg\|_{L^\infty(\O; R^3)}\leq C<\infty.
\ee
Here and hereafter, the symbol $C$ is used to denote a generic constant that may vary from line to line but it is independent of the parameters of the problem, in particular of $\e$.

Finally, in agreement with its physical interpretation,
the density $\vr$ is non-negative and we fix the total mass of the fluid to be
\be\label{mass}
M = \int_{\Omega_\e} \vr \ \dx  > 0.
\ee

For physical background to these equations and conditions, we refer to Sections 1.2.3, 1.2.4, and 1.2.6.

\subsection{Weak solutions}

We recall the definition of finite energy weak solutions to \eqref{i1}-\eqref{i4}, see e.g. \cite[Definition 4.1]{N-book}.

\begin{defi}\label{def-weaksol}

A couple of functions $[\vr, \vu]$ is said to be a \emph{finite energy weak solution} of the Navier-Stokes system \eqref{i1}-\eqref{i3} supplemented with the conditions \eqref{i4}-\eqref{mass} in $\O_\e$ provided:
\begin{itemize}

\item $\vr \geq 0~ \mbox{ a.e. in}~ \O_\e,$ and
\be\label{wk-sl1}
  \int_{\O_\e}\vr \ \dx = M,\ \vr \in  L^{\beta(\gamma)}(\O_\e) \ \mbox{for some} ~\beta(\gamma)  > \gamma, \ \vu \in  W_0^{1,2}(\O_\e; R^3);
\ee
\item for any test functions   $\psi \in C^\infty( \overline{\O}_\e)$   and $\varphi \in C_c^\infty( {\O_\e}; R^3)$:
\be\label{wk-sl2}
\intOe{ \vr \vu \cdot \nabla \psi  }  =0,
\ee
\be\label{wk-sl3}
 \intOe{ \vr \vu \otimes \vu : \nabla \varphi+ p(\vr) \dive \varphi-\SSS(\nabla \vu) : \nabla \varphi +(\vr \vf+\vg)\cdot \varphi}=0;
\ee

 \item the \emph{energy inequality}
\be\label{wk-sl4}
\intOe{ \SSS(\nabla \vu) : \nabla \vu } \leq \intOe{ (\vr\vf+\vg)\cdot \vu }
\ee
holds.
\end{itemize}

Moreover, a finite energy weak solution $[\vr, \vu]$ is said to be a \emph{renormalized weak solution} if
\be\label{ren}
\int_{R^3}{ b(\vr)\vu \cdot \Grad \psi + \Big( b( \vr) - b'(\vr) \vr \Big) \dive \vu\, \psi }\ \dx = 0
\ee
for any $\psi \in  C_c^\infty(R^3)$, where $[\vr, \vu]$ were extended to be zero outside $\O_\e$,
and any $b\in C^0([0,\infty))\cap C^1((0,\infty))$ such that
\be\label{b-pt1}
b'(s)\leq c\, s^{-\lambda_0} \ \mbox{for}\  s\in (0,1],\quad \ b'(s)\leq c\,s^{\lambda_1} \ \mbox{for}\  s\in [1,\infty),
\ee
with
\be\label{b-pt2}
c>0,\quad \lambda_0<1, \quad -1<\lambda_1 \leq \frac{\beta(\gamma)}{2}-1.
\ee

\end{defi}

\begin{remark}\label{rem:hom2}
By DiPerna-Lions' transport theory (see {\rm \cite[Section II.3]{DiP-L}} and the modification in {\rm \cite[Lemma 3.3]{N-book}}), for any $r\in L^\beta(\Omega),~\beta\geq 2,\ \vv\in W^{1,2}_0(\Omega)$, where $\Omega\subset  R^3$ is a bounded domain of class $C^2$, such that
\begin{equation} \label{rrr}
\dive (r\vv)=0 \quad \mbox{in}\quad \mathcal{D}'(\Omega),
\end{equation}
the renormalized equation
\be\label{eq-r-v-ren}
 \dive\big(b(r)\vv\big)+\big(r b'(r) -b(r)\big)\dive \vv=0, \quad \mbox{holds in}\ \mathcal{D}'(  R^3),
\ee
for any $b\in C^0([0,\infty))\cap C^1((0,\infty))$ satisfying \eqref{b-pt1}-\eqref{b-pt2} provided $r$ and $\vv$ have been extended to be zero outside $\Omega$.
We also note that the hypothesis of the smoothness of $\Omega$ can be dropped provided (\ref{rrr}) is replaced by a stronger stipulation \eqref{wk-sl2}
for any $\psi \in C^{\infty} (\overline{\Omega})$.
\end{remark}

From the physical point of view, the available \emph{existence} theory of finite energy weak solutions in the sense of Definition \ref{def-weaksol} is still not completely satisfactory.
Recall that the relevant values of the adiabatic exponent are $1 \leq \gamma \leq 5/3$, where the case $\gamma = 1$ corresponds to the isothermal case while
$\gamma = 5/3$ is the adiabatic exponent of the monoatomic gas. Lions \cite{Lions-C} proves the existence of weak solutions in the range $\gamma > 5/3$. His proof is based on energy type arguments combined with the refined pressure estimates adopted also in the present paper. Lions' theory has been extended to the physical range
$\gamma \leq 5/3$ by several authors, see B\v rezina and Novotn\' y \cite{BREZNOV}, Plotnikov and Sokolowski \cite{PloSokbook} for the case $\g>3/2$,  Frehse, Steinhauer and Waigant \cite{FSW} for $\gamma > 4/3$. The best result available has been obtained by Plotnikov and Weigant in \cite{PW} for $\g>1$. All the results attacking the physical range $\gamma \leq 5/3$ use
delicate estimates that are not directly applicable to the case of perforated domains as they may fail to be uniform with respect to $\e \to 0$.

\subsection{Main results}

Our principal result concerns the construction of the inverse of the divergence operator on the family of perforated domains $\{ \O_\e \}_{\e > 0}$.

\subsubsection{Inverse of divergence}

\begin{theorem}\label{thm-bog} Let $\{ \O_\e \}_{\e > 0}$ be a family of domains enjoying the properties specified in {\rm Section \ref{sec:domain}}. Then there exists a linear operator
\[
\CalB_\e : L_0^{q}(\O_\e) \to W_0^{1,q}(\O_\e; R^3),\ 1 < q < \infty,
\]
such that for any $f\in L_0^{q}(\O_\e)$,
\ba\label{pt-bog}
&\dive \CalB_\e(f) =f \ \mbox{in} \ \O_\e,\\
&\|\CalB_\e(f)\|_{W_0^{1,q}(\O_\e; R^3)}\leq C\, \left(1+\e^{\frac{(3-q)\a-3}{q}}\right)\|f\|_{L^q(\O_\e)},
\ea
for some constant $C$ independent of $\e$.
\end{theorem}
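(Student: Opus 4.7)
The construction is a two-scale procedure. First, extend $f$ by zero from $\O_\e$ to all of $\O$, obtaining $\tilde f\in L^q_0(\O)$ (zero mean is preserved since $f$ vanishes on the holes), and apply the classical Bogovskii operator on the smooth bounded $\O$ to produce a macroscopic inverse $\vu_0\in W^{1,q}_0(\O;R^3)$ with $\dive \vu_0=\tilde f$ and $\|\vu_0\|_{W^{1,q}_0(\O)}\leq C\|f\|_{L^q(\O_\e)}$, $C$ independent of $\e$. Since $\vu_0$ does not vanish on the holes, the next task is to subtract a corrector $\sum_n \mathbf w_n$, each supported in the control ball $B_n:=B(x_{\e,n},\de_1\e)$, matching $\vu_0$ on $T_{\e,n}$ and divergence-free on $B_n\setminus T_{\e,n}$. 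The candidate Bogovskii operator on $\O_\e$ is then $\CalB_\e(f):=\vu_0-\sum_n \mathbf w_n$: it vanishes on $\partial\O$ (via $\vu_0$) and on each $\partial T_{\e,n}$ (because $\mathbf w_n=\vu_0$ on $T_{\e,n}$), and satisfies $\dive\CalB_\e(f)=f$ on $\O_\e$.

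The key is to design each $\mathbf w_n$ with the sharp $\e$-dependence, which is achieved by a constant-plus-oscillation splitting. Write $\vu_0|_{B_n}=\bar\vu_{0,n}+(\vu_0-\bar\vu_{0,n})$, with $\bar\vu_{0,n}:=\frac{1}{|B_n|}\int_{B_n}\vu_0$, and correspondingly $\mathbf w_n=\mathbf w_n^{(1)}+\mathbf w_n^{(2)}$. For the constant part, use a cutoff $\phi_n$ at the hole scale---$\phi_n\equiv 1$ on $T_{\e,n}$, supported in $B(x_{\e,n},C\e^\a)$, $|\nabla \phi_n|\lesssim \e^{-\a}$---and set $\mathbf w_n^{(1)}=\bar\vu_{0,n}\phi_n+\mathbf w_n^{\rm Bog}$, where $\mathbf w_n^{\rm Bog}$ is a local Bogovskii correction on $B_n\setminus T_{\e,n}$ enforcing $\dive \mathbf w_n^{(1)}=0$. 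A direct computation gives $\|\nabla \phi_n\|_{L^q}\sim \e^{\a(3-q)/q}$, whence $\|\nabla \mathbf w_n^{(1)}\|_{L^q(B_n)}\lesssim |\bar\vu_{0,n}|\,\e^{\a(3-q)/q}$. Combining this with Jensen's estimate $|\bar\vu_{0,n}|^q\leq|B_n|^{-1}\|\vu_0\|_{L^q(B_n)}^q\sim\e^{-3}\|\vu_0\|_{L^q(B_n)}^q$ and $\ell^q$-summing over the $\sim\e^{-3}$ disjoint $B_n$'s produces exactly the factor $\e^{((3-q)\a-3)/q}\|f\|_{L^q}$. For the oscillating part, use a cutoff $\psi_n$ at the control-ball scale ($|\nabla\psi_n|\lesssim \e^{-1}$) together with an analogous local Bogovskii correction; Poincar\'e's inequality $\|\vu_0-\bar\vu_{0,n}\|_{L^q(B_n)}\lesssim \e\|\nabla \vu_0\|_{L^q(B_n)}$ cancels the factor $\e^{-1}$ from $\nabla \psi_n$ and leaves a bounded contribution $O(\|f\|_{L^q})$ after summation.

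The main obstacle is twofold: the constant-plus-oscillation splitting is essential (a naive single cutoff at the control-ball scale would only yield $O(\e^{-1})$ for every $\a$), and one simultaneously needs a \emph{local} Bogovskii operator on each annular region $B_n\setminus T_{\e,n}$ whose operator norm is uniform in $\e$, even though the rescaled annulus $B(0,\de_1)\setminus \e^{\a-1}T_n$ itself depends on $\e$. The latter can be obtained by extending the source by zero to $B_n$, solving the classical Bogovskii on the full ball, removing the residual trace on $\partial T_{\e,n}$ by a cutoff at the small-hole scale, and correcting the resulting divergence mismatch by another scale-invariant Bogovskii step on a thin shell around the hole; the key quantitative input is the Sobolev--H\"older-type estimate $\|\vv\|_{L^q({\rm shell}_s)}\lesssim s\|\nabla \vv\|_{L^q}$ for $1<q<3$, which absorbs the $s^{-1}$ coming from the cutoff's gradient. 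The cases $q\geq 3$ are handled by analogous but modified embeddings (using $L^\infty$ for $q>3$). Assembling the constant-part and oscillating-part contributions yields the stated bound $C(1+\e^{((3-q)\a-3)/q})\|f\|_{L^q(\O_\e)}$.
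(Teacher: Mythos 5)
Your construction is essentially the same as the paper's: extend $f$ by zero and apply the global Bogovskii operator on $\O$, then subtract a corrector built from a constant-plus-oscillation splitting of $\vu_0$ near each hole, using a hole-scale cutoff $\phi_n$ for the constant (mean) part and a control-volume-scale cutoff for the mean-zero part, with local uniformly-bounded Bogovskii operators on the punctured balls to restore the divergence; the $\ell^q$-sum over $\sim\e^{-3}$ disjoint control volumes then yields the exponent $\frac{(3-q)\alpha-3}{q}$ via Jensen's inequality for the constant piece and Poincar\'e for the oscillating piece, exactly as in the paper. The only real difference is that you sketch a self-contained proof of the uniform local Bogovskii lemma (Lemma~\ref{lem-Bog1}), whereas the paper cites Galdi's construction or the John-domain results of \cite{ADM,DRS}; your shell argument based on $\|\vv\|_{L^q(\mathrm{shell}_s)}\lesssim s\|\nabla\vv\|_{L^q}$ is correct for $1<q<3$ via Sobolev embedding and H\"older, but note that for $q\geq 3$ the $L^\infty$/Morrey route gives only $s^{3/q}$ rather than $s$, so that sketch as stated does not close for large $q$ and one really should invoke the scale-invariant John-domain theory there, as the paper does.
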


The existence of such an operator on a \emph{fixed} Lipschitz domain has been established by several authors, notably by Bogovskii \cite{bog}.
Our contribution is therefore the explicit dependence of the estimate (\ref{pt-bog}) on $\e$. In particular, we recover a uniform bound as soon as
$(3-q)\a-3\geq0$. Note that the domains in the family $\{ \O_\e \}_{\e > 0}$ \emph{are not} uniformly Lipschitz domains or uniform John domains, for which such a result would follow from Bogovskii \cite{bog} and Galdi \cite{Galdi} or
Acosta, Dur\'an and Muschietti \cite{ADM} and  Diening, R{\r{u}}\v{z}i\v{c}ka and Schumacher \cite{DRS}. We also note thatTheorem \ref{thm-bog} is optimal with respect to the value of $q$ since functions in the Sobolev spaces $W^{1,q}$ with $ q>3$ are continuous and a uniform bound in (\ref{pt-bog}) is not expected if the holes become asymptotically dense and small in $\Omega$.

The proof of Theorem \ref{thm-bog} is given in Section \ref{sec:bog-proof}.

\subsubsection{Asymptotic limit of compressible fluid flows in perforated domains}

As a corollary of Theorem \ref{thm-bog}, we show that the asymptotic limit of solutions $[\vr_\e, \vu_\e]$ of the compressible Navier-Stokes system (\ref{i1})-(\ref{i4}), (\ref{mass}) in $\O_\e$ coincides with a solution of the same system on the homogeneous domain $\Omega$.

\begin{theorem}\label{thm-hom}
Suppose conditions \eqref{pp1}, \eqref{ex-force} and \eqref{mass} are satisfied. Suppose $2<\g\leq 3$ and  $\a>3$ be given such that
\be\label{res-g-a}
\a \,\frac{\g-2}{2\g-3} >1.
\ee
Let $[\vr_\e,\vu_\e]_{0<\e<1}$ be a family of finite energy weak solutions to \eqref{i1}-\eqref{i4} in $\O_\e$. Then we have uniform estimates
\be\label{est-r-u}
\sup_{0<\e<1}\left(\|\vr_\e\|_{L^{\b(\g)}(\O_\e)}+\|\vu_\e\|_{W_0^{1,2}(\O_\e; R^3)}\right)\leq C<\infty, \quad \beta(\gamma):=3(\g-1).
\ee
Moreover, extending $[\vr_\e, \vu_\e]$ to be zero outside $\Omega_\e$, we get, up to a substraction of subsequence,
\be\label{lim-r-u}
\vr_\e  \to \vr \ \mbox{weakly in} \ L^{\beta(\gamma)}(\O),\quad  \vu_\e \to \vu \   \mbox{weakly in}\  W_{0}^{1,2}(\O; R^3),
\ee
where $[\vr, \vu]$ is a finite energy weak solution to the same system of equations \eqref{i1}-\eqref{i4} in $\O$.

\end{theorem}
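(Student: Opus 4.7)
The plan is to combine standard energy-and-Bogovskii a priori estimates for the compressible Navier-Stokes system with Lions' compactness theory, the role of Theorem \ref{thm-bog} being precisely to furnish an $\e$-uniform inverse-divergence on the perforated domains. The condition \eqref{res-g-a} is calibrated exactly so that the natural pressure test function produced by that theorem gives a bound independent of $\e$.

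\textbf{A priori estimates.} First I derive from the energy inequality \eqref{wk-sl4}, via Korn and Sobolev embedding on $\O_\e \subset \O$, the bound $\|\vu_\e\|_{W_0^{1,2}(\O_\e)} \leq C (1 + \|\vr_\e\|_{L^{6/5}(\O_\e)})$. Second --- the crucial step --- I set $\theta := 2\g-3$ and $q := 3(\g-1)/(2\g-3)$, and apply Theorem \ref{thm-bog} to $\vr_\e^\theta - \langle \vr_\e^\theta\rangle$. Since $\g \in (2,3]$ gives $q \in [2,3)$, and a direct computation yields
\[
(3-q)\a - 3 \,=\, 3\Big(\a\,\tfrac{\g-2}{2\g-3} - 1\Big) \,>\, 0
\]
under \eqref{res-g-a}, the function $\phi_\e := \mathcal{B}_\e(\vr_\e^\theta - \langle \vr_\e^\theta\rangle)$ is bounded in $W_0^{1,q}(\O_\e; R^3)$ uniformly in $\e$ by $C\|\vr_\e\|_{L^{\beta(\g)}}^\theta$, since $\theta q=\beta(\g)$. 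Testing \eqref{wk-sl3} against $\phi_\e$, noting $\g + \theta = \beta(\g)$, and estimating the convective term $\int \vr_\e \vu_\e \otimes \vu_\e :\nabla\phi_\e$ by H\"older with matched exponents $(\beta(\g),6,6,q)$ (indeed $\tfrac{1}{\beta(\g)} + \tfrac{2}{6} + \tfrac{1}{q} = 1$), then absorbing via the energy bound, yields \eqref{est-r-u} uniformly in $\e$.

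\textbf{Passing to the limit.} Extending $[\vr_\e, \vu_\e]$ by zero outside $\O_\e$, \eqref{est-r-u} gives the weak convergences \eqref{lim-r-u} along a subsequence; compact embedding $W_0^{1,2}(\O) \hookrightarrow L^6(\O)$ combined with compactness of $\vr_\e$ in $W^{-1,s}(\O)$ inherited from the continuity equation upgrades these to $\vr_\e \vu_\e \rightharpoonup \vr\vu$ and $\vr_\e\vu_\e \otimes \vu_\e \rightharpoonup \vr\vu \otimes \vu$ in the sense of distributions. Any $\psi \in C^\infty(\overline{\O})$ restricts to $C^\infty(\overline{\O}_\e)$, so \eqref{wk-sl2} passes directly to the limit on $\O$. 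For \eqref{wk-sl3} the test function $\vp \in C_c^\infty(\O; R^3)$ must be modified: I take $g_\e \in C_c^\infty(\O_\e)$ equal to one outside $\bigcup_n B(x_{\e,n}, \de_2\e)$, vanishing on $\bigcup_n B(x_{\e,n}, \de_1\e)$, and satisfying $g_\e \to 1$ strongly in $W^{1,r}(\O)$ for some $r > 2$; such a cutoff exists because $\a > 3$ forces the $W^{1,r}$-capacity of the holes to vanish, exactly as in \cite{FL1}. Then $g_\e\vp$ is admissible in \eqref{wk-sl3} and converges back to $\vp$ upon passing to the limit. The energy inequality \eqref{wk-sl4} in the limit follows from weak lower semicontinuity of the viscous term together with the convergence of the right-hand side.

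The principal obstacle is the identification of the weak limit of the pressure, $\overline{p(\vr)} = p(\vr)$. I intend to follow Lions: using Theorem \ref{thm-bog} on $\O_\e$ and the classical Bogovskii operator on $\O$ to construct test functions of the form $\mathcal{B}_\e(T_k(\vr_\e) - \langle T_k(\vr_\e)\rangle)$ (with $T_k$ a truncation at level $k$), one derives the effective viscous flux identity
\[
\overline{p(\vr)\dive \vu} - \overline{p(\vr)}\,\dive \vu \,=\, \Big(\tfrac{4\mu}{3} + \eta\Big)\big(\overline{\vr\dive \vu} - \vr\,\dive \vu\big).
\]
Combining this with the renormalized continuity equation --- justified by Remark \ref{rem:hom2} since $\beta(\g) = 3(\g-1) \geq 3$ --- and monotonicity of $s \mapsto s^\g$, one then forces $\vr_\e \to \vr$ a.e. in $\O$, whence $p(\vr_\e) \rightharpoonup p(\vr)$. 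The $\e$-uniformity of the Bogovskii operator guaranteed by Theorem \ref{thm-bog} is essential both here (to keep the commutators arising from localization near the holes under control) and in the a priori step.
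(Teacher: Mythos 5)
Your a priori estimate step matches the paper's argument essentially verbatim: the test function $\mathcal{B}_\e(\vr_\e^{2\g-3}-\langle\vr_\e^{2\g-3}\rangle)$ in $W_0^{1,q}$ with $q=\frac{3(\g-1)}{2\g-3}$, the identity $(3-q)\a-3=3\big(\a\frac{\g-2}{2\g-3}-1\big)$, and the H\"older bookkeeping are exactly the computation of Section 4.1. However, the two later steps contain genuine gaps.

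\textbf{The cutoff $g_\e$ is built at the wrong scale.} You ask for $g_\e$ vanishing on $\bigcup_n B(x_{\e,n},\de_1\e)$ and equal to $1$ outside $\bigcup_n B(x_{\e,n},\de_2\e)$, claiming $g_\e\to 1$ strongly in $W^{1,r}$. This cannot hold: the set $\{g_\e=0\}$ contains $N(\e)\sim\e^{-3}$ balls of radius $\de_1\e$, so $|\{g_\e\neq 1\}|\gtrsim N(\e)\e^3\gtrsim 1$ does not vanish, and $\|1-g_\e\|_{L^p(\O)}$ stays bounded away from $0$ for every $p$ (and $\|\nabla g_\e\|_{L^r}$ blows up). The smallness of the holes ($\a>3$) is irrelevant if the transition layer of $g_\e$ is spread over an $O(\e)$-ball. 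The correct construction — the one in the paper's Lemma~\ref{lem:momentum} — has $g_\e$ vanishing only on $T_{\e,n}$ and equal to $1$ outside $B(x_{\e,n},\de_0\e^\a)$, so the transition layer is of size $\e^\a$. Then $\|1-g_\e\|_{L^q}\lesssim\e^{3(\a-1)/q}$ and $\|\nabla g_\e\|_{L^q}\lesssim\e^{3(\a-1)/q-\a}$, and the exponent $3(\a-1)/q-\a>0$ is precisely where condition \eqref{res-g-a} (with $q=\frac{3\g-3}{2\g-3}+\de_0$) enters. Your construction makes that condition invisible because it is never used, which should have been a warning sign.

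\textbf{The Bogovskii operator cannot produce the effective viscous flux identity.} You propose test functions of the form $\mathcal{B}_\e(T_k(\vr_\e)-\langle T_k(\vr_\e)\rangle)$ to derive the flux relation. This will not work: the effective-viscous-flux cancellation relies on the fact that the test vector field is \emph{curl-free} (a gradient), so that $\nabla\phi=\nabla\nabla\Delta^{-1}(\cdot)=(\mathcal R_{i,j})(\cdot)$ and the viscous stress contracts to a scalar multiple of $\dive\vu\,\dive\phi$ plus commutators controlled by the div-curl lemma. The Bogovskii operator only controls $\dive\mathcal{B}_\e$, not $\nabla\mathcal{B}_\e$, and its image is in no way curl-free, so the viscosity term does not collapse. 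Bogovskii-with-truncation gives pressure estimates, not the flux identity. The paper instead uses $\phi_\e=\psi\nabla\Delta^{-1}(1_\O\tilde\vr_\e)$ and $\psi\nabla\Delta^{-1}(1_\O\vr)$ (Lemma~\ref{lem:flux}), together with the div-curl commutator lemma (Lemma~\ref{lem-ct}). Note also that the momentum equation tested there is the one posed on $\O$ with the remainder $r_\e$ from Lemma~\ref{lem:momentum}, and the estimate \eqref{est-re} is precisely what makes $\langle r_\e,\phi_\e\rangle\to 0$ — so the scale-$\e^\a$ cutoff from the previous point is a prerequisite for this step too. (The displayed identity in your proposal also has $\overline{p(\vr)\dive\vu}$ on the left where $\overline{p(\vr)\vr}$ should appear, but I take that to be a slip.)

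The remainder — renormalization via Remark~\ref{rem:hom2}, monotonicity of $s\mapsto s^\g$ to conclude $\overline{p(\vr)}=p(\vr)$ — is the standard Lions/Feireisl endgame and is fine in outline.
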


The proof of Theorem \ref{thm-hom}  is given in Section \ref{sec:pf-hom}.

\medskip

We give a remark concerning the similar result in two dimensional setting:
\begin{remark}

The argument in this paper cannot be directly extended to 2D setting. In particular, the construction of Bogovksii type operator in Section \ref{sec:bog-proof} and the choice of $g_\e$ in \eqref{def-ge1} in the proof of Lemma \ref{lem:momentum} do not apply to domains in $R^2$. However, similar result still may hold in 2D setting. Since in 2D setting, the density enjoys better integrability (in $L^{2\gamma}$ as long as $\g>1$) such that the pressure is in $L^2$. This allows us to apply the restriction operator constructed by Allaire \cite{ALL-NS1,ALL-NS2} to construct some uniformly bounded Bogovskii type operators, just as in the previous paper \cite{FL1}.  We employ again Allaire construction to find a function sequence that vanishes on the holes and converges to $1$ in some proper sense ($w^\e_k$ and $q^\e_k$ in Hypotheses H(1)-H(6) in Section 3.2).  So combining the techniques in this paper and in the previous paper \cite{FL1} should imply similar results in 2D setting. Of course, the holes must be much "smaller" - they have larger capacity in two dimensional spaces. Our interest here is to handle {\em better} $\gamma$ in 3D setting.

\end{remark}

We finally remark that, in this paper, the obstacles are assumed to be isolated in 3D domain. More realistic situation with connected boundaries may be treated in a similar manner for which the incompressible Stokes equations is considered in \cite{ALL-3}. However, such an extension is far from being trivial and a considerable
number of new difficulties would have to be overcome.

\section{Construction of the inverse of the divergence operator in perforated domains}
\label{sec:bog-proof}

This section is devoted to the proof of Theorem \ref{thm-bog}. For $f\in L^q(\Omega_\e)$ with $\int_{\Omega_\e} f\ \dx=0$,  we consider the extension $\tilde f=:E(f)$ defined as
\be\label{tilde-f}
\tilde f=f \ \mbox{in}\ \Omega_\e,\quad \tilde f=0 \ \mbox{on}\  \Omega\setminus \Omega_\e=  \bigcup_{n=1}^{N(\e)}  T_{\e,n}.
\ee
Clearly $\tilde f\in L_0^q(\O)$. Employing the standard Bogovskii's construction (see Galdi \cite[Chapter 3]{Galdi}) on the domain $\Omega$ we find $\vu={\mathcal B}_\O(\tilde f)\in W_0^{1,q}(\Omega; R^3)$ such that
\be\label{div-f-u}
\Div \vu=\tilde f \ \mbox{in} \ \Omega \quad \mbox{and} \quad \|\vu\|_{W_0^{1,q}(\Omega; R^3)}\leq C\,\|\tilde f\|_{L^q(\Omega)}= C\,\|f\|_{L^q(\Omega_\e)}
\ee
for some constant $C$ depending only on $\Omega$ and $q$.

In  accordance with hypotheses \eqref{dis-holes}, we introduce two cut-off functions $\chi_{\e,n}$ and $\phi_{\e,n}$ such that
\begin{equation}
\label{A1}
\chi_{\e,n} \in C^\infty_c (B(x_{\e,n}, \delta_2 \e )),\ \chi_{\e,n}|_{\overline{B(x_{\e,n}, \delta_1 \e)} } = 1,\
\| \nabla \chi_{\e,n} \|_{L^\infty( R^3; R^3)}\leq {C}{\e^{-1}},
\end{equation}
\begin{equation} \label{A2}
\phi_{\e,n} \in C^\infty_c (B(x_{\e,n}, \de_0 \e^\alpha )),\ \phi_{\e,n}|_{T_{\e,n}} = 1,\
\| \nabla \phi_{\e,n} \|_{L^\infty( R^3; R^3)}\leq {C}{\e^{-\alpha}},
\end{equation}

Denote
\[
D_{\e,n} = B(x_{\e,n}, \delta_2\e) \setminus \overline{ B(x_{\e,n}, \delta_1\e) },\   E_{\e,n} = B(x_{\e,n}, \delta_2\e)
\setminus T_{\e,n},\ F_{\e,n} = B(x_{\e,n}, \de_0 \e^\alpha) \setminus T_{\e,n}.
\]

Denoting
\[
\left< v \right>_{B} = \frac{1}{|B|} \int_B v \ {\rm d}x,
\]
we introduce
\ba\label{def-b-b}
&{\bf b}_{\e,n}(\vu) = \chi_{\e,n} \left( \vu - \left< \vu \right>_{D_{\e,n}} \right) \in W^{1,q}_0 (B(x_{\e,n}, \delta_2 \e); R^3), \\
&\boldsymbol\beta_{\e,n} (\vu) = \phi_{\e,n} \left< \vu \right>_{D_{\e,n}} \in W^{1,q}_0 (B(x_{\e,n}, \de_0 \e^\alpha );  R^3).
\ea

Revoking Poincar\' e's inequality
\[
\left\| \vu -  \left< \vu \right>_{D_{\e,n}} \right\|_{L^q (D_{\e,n};  R^3)} \leq C \e \| \nabla \vu \|_{L^q(D_{\e,n};  R^9)}
\]
and (\ref{A1}),
we estimate
\be \label{est-bk-bk1}
\left\| \nabla {\bf b}_{\e,n}(\vu) \right\|_{L^q (D_{\e,n};  R^9)} \leq
\left\| \chi_{\e,n} \nabla \left( \vu -  \left< \vu \right>_{D_{\e,n}} \right) \right\|_{L^q (D_{\e,n};  R^9)}  +
\left\| \nabla \chi_{\e,n} \otimes \left( \vu -  \left< \vu \right>_{D_{\e,n}} \right) \right\|_{L^q (D_{\e,n};R^9)}
\ee
\[
\leq C \left( \| \nabla \vu \|_{L^q(D_{\e,n}; R^3)} + \e^{-1} \left\| \vu -  \left< \vu \right>_{D_{\e,n}}  \right\|_{L^q (D_{\e,n})} \right)
\leq C \left\| \nabla \vu \right\|_{L^q(D_{\e,n}; R^3)}.
\]

Similarly, by virtue of (\ref{A2}) and Jensen's inequality,
\be \label{est-bk-bk2}
\| \nabla \beta_{\e,n} (\vu) \|_{L^q(B(x_{\e,n}, \de_0 \e^\alpha) ;  R^9))} = \left\| \nabla \phi_{\e,n} \cdot \left< \vu \right>_{D_{\e,n}} \right\|_{L^q(B(x_{\e,n}, \de_0 \e^\alpha))}
\ee
\[
\leq C \e^{ \left( \frac{3 } {q} - 1 \right) \alpha }  \left| \left< \vu \right>_{D_{\e,n}} \right| \leq C \e^{ \left( \frac{3 } {q} - 1 \right)\a -
\frac{3}{q} } \| \vu \|_{L^q(D_{\e,n};  R^3)}.
\]

Next, we claim the following result.

\begin{lemma}\label{lem-Bog1}
For any $1<q<\infty$, there exist a linear operator $\mathcal{B}_{E_{\e,n}}\,:\, L_0^q(E_{\e,n}) \to W_0^{1,q}(E_{\e,n}; R^3)$ and a linear operator $\mathcal{B}_{F_{\e,n}}\,:\, L_0^q(F_{\e,n}) \to W_0^{1,q}(F_{\e,n}; R^3)$ such that for any $f_1\in L_0^q(E_{\e,n})$ and any $f_2\in L_0^q(F_{\e,n})$, there holds
\ba\label{bog-E-F1}
&\Div \mathcal{B}_{E_{\e,n}}(f_1)=f_1,\quad \| \mathcal{B}_{E_{\e,n}}(f_1) \|_{W_0^{1,q}(E_{\e,n}; R^3)}\leq C\, \|f_1\|_{L^q(E_{\e,n})},\\
&\Div \mathcal{B}_{F_{\e,n}}(f_2)=f_2,\quad \| \mathcal{B}_{F_{\e,n}}(f_2) \|_{W_0^{1,q}(F_{\e,n}; R^3)}\leq C\, \|f_2\|_{L^q(F_{\e,n})},\nn
\ea
for some constant $C$ independent of $\e$ and   $n$  .

\end{lemma}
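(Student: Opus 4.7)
I would construct $\mathcal{B}_{F_{\e,n}}$ first and then reuse it inside the construction of $\mathcal{B}_{E_{\e,n}}$. For $F_{\e,n}$, the affine map $\Phi_\e(x)=\e^{-\alpha}(x-x_{\e,n})$ sends $F_{\e,n}$ onto the $\e$-independent bounded $C^2$ domain $\tilde F_n = B(0,\delta_0)\setminus T_n$. The classical Bogovskii construction on $\tilde F_n$ (Galdi \cite[Ch.~3]{Galdi}) produces a bounded linear operator $\tilde{\mathcal{B}}_n \colon L_0^q(\tilde F_n)\to W_0^{1,q}(\tilde F_n;R^3)$, and I set $\mathcal{B}_{F_{\e,n}}(f)(x):=\e^\alpha (\tilde{\mathcal{B}}_n \tilde f)(\Phi_\e x)$ with $\tilde f=f\circ \Phi_\e^{-1}$. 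Under this dilation both $\|f\|_{L^q(F_{\e,n})}$ and $\|\nabla \mathcal{B}_{F_{\e,n}}(f)\|_{L^q(F_{\e,n})}$ scale by the same factor $\e^{3\alpha/q}$, so the estimate transfers with an identical constant, uniform in $\e$ (and in $n$, given a common $C^2$ modulus for the reference shapes $T_n$).

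For $\mathcal{B}_{E_{\e,n}}$ I would proceed in three layers. Given $f_1\in L_0^q(E_{\e,n})$, let $\tilde f_1$ be its extension by zero to $B:=B(x_{\e,n},\delta_2\e)$, which is still zero-mean since $\tilde f_1 \equiv 0$ on $T_{\e,n}$. Bogovskii on the ball $B$ (scale-invariant constant) provides $\vu^{(1)}\in W^{1,q}_0(B;R^3)$ with $\dive\vu^{(1)}=\tilde f_1$ and $\|\vu^{(1)}\|_{W^{1,q}(B)}\leq C\|f_1\|_{L^q(E_{\e,n})}$. Since $\vu^{(1)}$ need not vanish on $\partial T_{\e,n}$, I cut it off by the bump $\phi_{\e,n}$ of \eqref{A2} and set $\vu^{(2)}=(1-\phi_{\e,n})\vu^{(1)}$. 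Then $\vu^{(2)}\in W^{1,q}_0(E_{\e,n};R^3)$ and
\[
\dive \vu^{(2)} = f_1 - e \quad \text{on} \quad E_{\e,n}, \qquad e := \phi_{\e,n} f_1 + \nabla \phi_{\e,n}\cdot \vu^{(1)},
\]
with $e$ supported in $F_{\e,n}$. Integrating by parts on $F_{\e,n}$ and using the divergence theorem on $T_{\e,n}$ (where $\dive\vu^{(1)}=\tilde f_1\equiv 0$) yields $\int_{F_{\e,n}}e=0$, so $e \in L_0^q(F_{\e,n})$. I then apply $\mathcal{B}_{F_{\e,n}}$ to $e$, obtaining $\vu^{(3)}\in W^{1,q}_0(F_{\e,n};R^3)$ with $\dive\vu^{(3)}=e$, and I set $\mathcal{B}_{E_{\e,n}}(f_1):=\vu^{(2)}+\vu^{(3)}$ (with $\vu^{(3)}$ extended by zero outside $F_{\e,n}$). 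By construction $\dive\mathcal{B}_{E_{\e,n}}(f_1)=f_1$ on $E_{\e,n}$ and the field vanishes on $\partial E_{\e,n}$.

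The crux of the argument is the $\e$-uniform estimate $\|e\|_{L^q(F_{\e,n})}\leq C\|f_1\|_{L^q(E_{\e,n})}$. The bound $\|\phi_{\e,n} f_1\|_{L^q}\leq\|f_1\|_{L^q}$ is immediate, but the term $\|\nabla \phi_{\e,n}\cdot\vu^{(1)}\|_{L^q(F_{\e,n})}$ carries the dangerous factor $\|\nabla \phi_{\e,n}\|_\infty\lesssim \e^{-\alpha}$. In the range $1<q<3$ this is neutralized by the smallness of $\vu^{(1)}$ on the tiny set $F_{\e,n}$: H\"older with the Sobolev exponent $q^*=3q/(3-q)$ and the scale-invariant Sobolev embedding on $W^{1,q}_0(B)$ give
\[
\|\vu^{(1)}\|_{L^q(F_{\e,n})}\leq |F_{\e,n}|^{1/3}\|\vu^{(1)}\|_{L^{q^*}(B)}\leq C\,\e^\alpha\,\|\vu^{(1)}\|_{W^{1,q}(B)}\leq C\e^\alpha\|f_1\|_{L^q(E_{\e,n})},
\]
so the $\e^{-\alpha}$ and $\e^\alpha$ scales cancel exactly. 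This delicate balance is the heart of the argument; for $q\geq 3$ the same cancellation fails, and one would instead rescale $E_{\e,n}$ by $\e^{-1}$ into the \emph{uniformly} John domain $B(0,\delta_2)\setminus\e^{\alpha-1} T_n$ and invoke the Bogovskii theory of Diening--R\r{u}\v{z}i\v{c}ka--Schumacher \cite{DRS}.
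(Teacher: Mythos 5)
Your proof is correct and takes a more explicit route than the paper, which dispatches the lemma in one paragraph by citation (to Galdi's star-shaped decomposition, to the John-domain Bogovskii theory of \cite{ADM,DRS}, or — for $1<q<3$ — to a modification of Allaire's Lemma~2.2.4). For $\mathcal{B}_{F_{\e,n}}$ your $\e^{-\alpha}$-rescaling is exactly the observation that makes every one of the paper's cited references apply with an $\e$-independent constant. For $\mathcal{B}_{E_{\e,n}}$ the paper's references are invoked because $B(0,\delta_2)\setminus \e^{\alpha-1}T_n$ is a uniform John family; your three-layer cutoff construction is instead a self-contained argument for $1<q<3$ in the spirit of Allaire, and the cancellation $\|\nabla\phi_{\e,n}\|_\infty\cdot |F_{\e,n}|^{1/3}\sim \e^{-\alpha}\cdot\e^{\alpha}$ is verified correctly (including the zero-mean check for $e$ via $\int_{E_{\e,n}}\dive\vu^{(2)}=0=\int_{E_{\e,n}}f_1$). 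You rightly note that the cutoff trick fails for $q\geq 3$ and there fall back on \cite{DRS}, so the full range $1<q<\infty$ claimed in the lemma ultimately still relies on the same John-domain reference as the paper. One small point worth making explicit, which both your argument and the paper's leave implicit: the constants are uniform in $n$ only under some uniform geometric control on the reference shapes $T_n$ (a common modulus for the $C^2$ boundaries and a uniform positive gap $\overline{T_n}\subset B(0,\delta_0-\eta)$), since otherwise the John constants of $B(0,\delta_0)\setminus\overline{T_n}$ and $B(0,\delta_2)\setminus\e^{\alpha-1}T_n$ could degenerate along $n$.
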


  There are several ways how to construct the operators $\mathcal{B}_{E_{\e,n}}$, $\mathcal{B}_{F_{\e,n}}$. We can use the construction of
Galdi \cite[Chapter 3]{Galdi} that mimics the original Bogovskii's proof. Note that this procedure yields indeed the operators with the corresponding
norm independent of $\e$ and $n$, see Galdi \cite[Chapter 3]{Galdi}. Alternatively, we observe that
both $E_{\e,n}$ and $F_{\e,n}$ are uniform families of John domains, whence the desired construction can be found in \cite{ADM} and \cite{DRS}. In the case
$1<q<3$, Lemma \ref{lem-Bog1} can be also shown by modifying the arguments of Allaire \cite[Lemma 2.2.4]{ALL-NS1}.


\medskip

We now define a restriction type operator in the following way:
\be\label{restr-1}
R_\e (\vu) := \vu- \sum_{n = 1}^{N(\e)}  \left({\bf b}_{\e,n}(\vu) -{\mathcal B}_{E_{\e,n}}(\Div {\bf b}_{\e,n} (\vu) ) \right)-\sum_{n=1}^{N(\e)}  \left(\boldsymbol\b_{\e,n}(\vu) -{\mathcal B}_{F_{\e,n}}(\Div \boldsymbol\b_{\e,n}(\vu)) \right),
\ee
where ${\mathcal B}_{E_{\e,n}}(\Div {\bf b}_{\e,n} (\vu) )$ and ${\mathcal B}_{F_{\e,n}}(\Div \b_{\e,n}(\vu))$ were extended to be zero outside $E_{\e,n}$ and $F_{\e,n}$, respectively. We say such an operator is of restriction type in the sprit of Tartar \cite{Tartar1} and Allaire \cite{ALL-NS1,ALL-NS2}, because, as we will see in the sequel argument, $R_\e$ is a well defined linear operator from $W^{1,p}_0(\O;R^3)$ to $W^{1,p}_0(\O_\e;R^3)$.

We first check that $R_\e (\vu)$ is well defined, specifically that
\be\label{zero-mean1}
\int_{E_{\e,n}} \Div {\bf b}_{\e,n}(\vu) \,\dx =0,\quad \int_{F_{\e,n}} \Div \boldsymbol \b_{\e,n} (\vu) \,\dx =0.
\ee
Indeed, on one hand, by \eqref{A1} and \eqref{def-b-b}, we have
\be\label{zero-mean2}
\int_{B(x_{\e,n}, \delta_2 \e)} \Div {\bf b}_{\e,n} (\vu) \,\dx =0,\quad \int_{B(x_{\e,n}, \de_0 \e^\alpha )} \Div \boldsymbol\b_{\e,n} (\vu) \,\dx =0.
\ee
On the other hand, by \eqref{div-f-u}, \eqref{A1} and \eqref{A2}, in particular, $\dive \vu =0$ on $T_{\e,n}$,  we have
\ba\label{zero-mean3}
&\Div {\bf b}_{\e,n} (\vu) = \chi_{\e,n} \Div \vu + \nabla  \chi_{\e,n} \cdot (\vu-\langle \vu \rangle_{D_{\e,n}})  =0, \ && \mbox{on} \quad T_{\e,n},\\
&\Div \boldsymbol \b_{\e,n}(\vu) = \nabla  \phi_{\e,n} \cdot \langle \vu\rangle_{D_{\e,n}}=0, \  &&\mbox{on} \quad T_{\e,n};
\ea
  whence (\ref{zero-mean1}) follows from  \eqref{zero-mean2} and (\ref{zero-mean3}).

\smallskip

By the definition of $R_\e (\vu)$ in \eqref{restr-1} and the property of   $\vu = {\mathcal B}_\O (\tilde f)$  claimed in \eqref{div-f-u}  , we have
\be\label{pt-Ru1}
R_\e (\vu) \in W_0^{1,q}(\O; R^3),\quad \Div R_\e (\vu)= \Div \vu =\tilde f \quad \mbox{in $\O$}.
\ee

Finally, we define the Bogovskii type operator ${\mathcal B}_\e$ through the composition of the extension operator, the classical Bogovskii operator, and the restriction operator defined above in \eqref{restr-1}:
\be\label{def-B-e}
{\mathcal B}_\e(f):= R_\e (\vu)= R_\e ({\mathcal B}_\O (\tilde f))= R_\e \circ {\mathcal B}_\O \circ E (f).
\ee
Our ultimate goal is to show that $\mathcal{B}_\e$ enjoys all the properties claimed in Theorem \ref{thm-bog}.

For any $x\in T_{\e,n}$, $1 \leq n \leq N(\e)$, we have
\ba\label{pt-Ru2}
R_\e (\vu)(x) &=\vu(x) - \Big({\bf b}_{\e,n} (\vu) -{\mathcal B}_{E_{\e,n}}(\Div {\bf b}_{\e,n} (\vu))\Big) (x) -  \Big(\boldsymbol\b_{\e,n}(\vu) -{\mathcal B}_{F_{\e,n}}(\Div \boldsymbol\b_{\e,n}(\vu)) \Big)(x) \\
 &= \vu(x) - {\bf b}_{\e,n}(\vu) (x) -\boldsymbol\b_{\e,n} (\vu)(x) \\
 &= \vu(x)- \chi_{\e,n}(x) \left(\vu(x)-\langle \vu\rangle_{D_{\e,n}}\right)- \phi_{\e,n} (x)\langle \vu\rangle_{D_{\e,n}}\\
 &=0,
\ea
where we used the fact that
$$
\chi_{\e,n}(x) = \phi_{\e,n}(x) =1, \quad \mbox{for any $x\in T_{\e,n}$}.
$$
Thus, we have shown the desired relations
\be\label{pt-Ru3}
R_\e (\vu) \in W_0^{1,q}(\O_\e; R^3),\quad \Div R_\e (\vu)= f \quad \mbox{in $\O_\e$}.
\ee

To finish the proof of Theorem \ref{thm-bog}, it remains to show the bound
\ba\label{pt-re-bog}
\|R_\e(\vu)\|_{W_0^{1,q}(\O_\e; R^3)}\leq C\, \left(1+\e^{\frac{(3-q)\a-3}{q}}\right)\|f\|_{L^q(\O_\e)}.
\ea

By \eqref{est-bk-bk1}, \eqref{est-bk-bk2} and Lemma \ref{lem-Bog1}, we have
\ba\label{est-fina-bog1}
&{\bf b}_{\e,n} (\vu) -{\mathcal B}_{E_{\e,n}}(\Div {\bf b}_{\e,n} (\vu) )\in W_0^{1,q}(B(x_{\e,n}, \delta_2 \e); R^3),\\
&\left\| {\bf b}_{\e,n}(\vu) -{\mathcal B}_{E_{\e,n}}(\Div {\bf b}_{\e,n} (\vu) ) \right\|_{W_0^{1,q}(B(x_{\e,n}, \delta_2 \e); R^3)}\leq C\,
\| \nabla \vu \|_{L^q(B(x_{\e,n}, \delta_2 \e); R^9)},
\ea
and
\ba\label{est-fina-bog2}
&\boldsymbol\b_{\e,n} (\vu) -{\mathcal B}_{F_{\e,n}}(\Div \boldsymbol\b_{\e,n} (\vu) )\in W_0^{1,q}(B(x_{\e,n}, \de_0 \e^\alpha) ; R^3),\\
&\left\| \boldsymbol\b_{\e,n} (\vu) -{\mathcal B}_{F_{\e,n}}(\Div \boldsymbol\b_{\e,n}(\vu) ) \right\|_{W_0^{1,q}(B(x_{\e,n}, \de_0 \e^\alpha); R^3)}\leq C\, \e^{\frac{(3-q)\a-3}{q}} \| \vu\|_{L^q(B(x_{\e,n}, \de_0 \e^\alpha); R^3)}.
\ea

Finally, by \eqref{div-f-u}, \eqref{restr-1} and the fact
$$
B(x_{\e,n_1}, \delta_2 \e ) \cap B(x_{\e,n_2}, \delta_2 \e ) =\emptyset,\quad \mbox{whenever $n_1\neq n_2$},
$$
a direct calculation implies the estimate \eqref{pt-re-bog}. We have completed the proof of Theorem \ref{thm-bog}.

\section{Asymptotic analysis of the compressible fluid flow on a family of perforated domains}\label{sec:pf-hom}
This section is devoted to the proof of Theorem \ref{thm-hom}.  For any $0<\e<1$, let $[\vr_\e,\vu_\e]$ be a finite energy weak solution
  satisfying the hypotheses of Theorem \ref{thm-hom}. By the known results concerning integrability of weak solutions to the stationary
Navier-Stokes system, we have, see e.g. Novotn\' e and Stra\v skraba \cite[Chapter 4]{N-book}:
\be\label{re-ve-space}
\vr_\e \in L^{\beta(\g)}(\O_\e),\ \b(\g)=3(\g-1); \quad \vu_\e \in W_0^{1,2}(\O_\e; R^3).
\ee
  As we assume $2<\g\leq 3$, we have $\b(\g)=3(\g-1) >3$; whence, by Remark \ref{rem:hom2}, the solution $[\vr_\e,\vu_\e]$ is also a renormalized weak solution:
\begin{lemma}\label{lem:renormal}
We have
\be\label{eq-renomal}
\dive\big(b(\tilde\vr_\e)\tilde\vu_\e\big)+\big(\tilde\vr_\e b'(\tilde\vr_\e) -b(\tilde\vr_\e)\big)\dive \tilde\vu_\e=0, \quad \mbox{in}\ \mathcal{D}'(  R^3),
\ee
for any $b\in C^0([0,\infty))\cap C^1((0,\infty))$ satisfying \eqref{b-pt1}-\eqref{b-pt2}, where $[\tilde \vr_\e, \tilde \vu_\e]$ denotes the functions $[\vr, \vu]$ extended
to be zero outside $\O_\e$.
\end{lemma}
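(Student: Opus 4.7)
This lemma is a direct verification of the hypotheses of Remark \ref{rem:hom2} with $\Omega=\Omega_\e$, $r=\vr_\e$, and $\vv=\vu_\e$. First I would record the two integrability ingredients: by \eqref{re-ve-space} we have $\vr_\e\in L^{\beta(\g)}(\O_\e)$ with $\beta(\g)=3(\g-1)>3\geq 2$ (using the standing assumption $2<\g\leq 3$), and $\vu_\e\in W^{1,2}_0(\O_\e;R^3)$. The continuity equation \eqref{wk-sl2} gives $\dive(\vr_\e\vu_\e)=0$ in the strong sense, i.e.\ tested against every $\psi\in C^\infty(\overline{\O_\e})$, not merely against $\psi\in C_c^\infty(\O_\e)$.

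Next I would address the domain regularity. Since $\O$ is of class $C^2$ and each $T_{\e,n}$ is of class $C^2$ by \eqref{dis-holes}, the boundary $\partial\O_\e=\partial\O\cup\bigcup_n\partial T_{\e,n}$ is a disjoint union of $C^2$ hypersurfaces (the disjointness of the control volumes $B(x_{\e,n},\delta_2\e)$ separates the inner components from each other and from $\partial\O$), so $\O_\e$ itself is a bounded $C^2$ domain. Hence the smoothness hypothesis in Remark \ref{rem:hom2} is satisfied; in any case, the strengthened test-function class in \eqref{wk-sl2} already supplies the alternative route mentioned at the end of the remark.

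Finally I would check that the admissible range of renormalization functions in \eqref{b-pt1}--\eqref{b-pt2} is non-empty, namely $-1<\lambda_1\leq \beta(\g)/2-1=(3\g-5)/2$, which is a non-trivial interval for $\g>2$. Applying Remark \ref{rem:hom2} to $[\vr_\e,\vu_\e]$ and extending by zero outside $\O_\e$ delivers the renormalized continuity equation \eqref{eq-r-v-ren} in $\mathcal{D}'(R^3)$ for every such $b$, which is exactly \eqref{eq-renomal}.

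There is no serious obstacle here: the whole argument is bookkeeping for the DiPerna--Lions transport theory. The only point that requires a moment of care is ensuring that the continuity equation truly holds tested against $\psi\in C^\infty(\overline{\O_\e})$ (and not only $C_c^\infty(\O_\e)$), but this is built directly into Definition \ref{def-weaksol}.
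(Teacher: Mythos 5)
Your proof is correct and follows the same route the paper takes: the lemma is established by invoking Remark \ref{rem:hom2} once $\beta(\gamma)=3(\gamma-1)>3\geq2$ is verified from \eqref{re-ve-space} and the assumption $2<\gamma\leq3$. You simply flesh out the domain-regularity and test-function details that the paper leaves implicit.
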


\subsection{Uniform estimates}

We have the solution $[\vr_\e,\vu_\e]$ is in the function spaces shown in \eqref{re-ve-space}, but the classical estimates of their norms depend on the domain $\O_\e$, in particular on the Lipschitz character of $\O_\e$ which is unbounded as $\e\to 0$. To show the uniform estimates \eqref{est-r-u}, we need to employ the uniform Bogovskii type operator $\CalB_\e$ obtained in Theorem \ref{thm-bog} and constructed in Section \ref{sec:bog-proof}.

By using  the  Korn's inequality and H\"older's inequality, the energy inequality \eqref{wk-sl4} implies
\ba\label{ub1}
 \|\nabla  \vu_\e \|^2_{L^2(\O_\e; R^9)} &\leq C\, \Big(\|\vf\|_{L^\infty(\O; R^3)} \|\vr_\e \|_{L^{\frac{6}{5}}(\O_\e)} \|\vu_\e \|_{L^6(\O_\e; R^3)}\\
 &\qquad+\|\vg\|_{L^\infty(\O; R^3)}\|\vu_\e \|_{L^1(\O_\e; R^3)}\Big).
\ea

Since $\vu_\e\in W_0^{1,2}(\O_\e; R^3)$ has zero trace on the boundary, the Sobolev embedding inequality implies
\be\label{ub2}
\|\vu_\e\|_{L^6(\O_\e; R^3)}\leq C\, \ \|\nabla \vu_\e\|_{L^2(\O_\e; R^9)},
\ee
for some constant $C$ independent of the domain $\O_\e$.

By the above two estimates in \eqref{ub1} and \eqref{ub2}, we deduce
\ba\label{ub3}
\|\nabla  \vu_\e \|_{L^2(\O_\e; R^9)}+\|\vu_\e\|_{L^6(\O_\e; R^3)} &\leq C\, \left(\|\vf\|_{L^\infty(\O_\e; R^3)} \|\vr_\e \|_{L^{\frac{6}{5}}(\O_\e)} +\|\vg\|_{L^\infty(\O_\e; R^3)}\right) \\
&\leq C\, \left( \|\vr_\e \|_{L^{\frac{6}{5}}(\O_\e)} +1\right).
\ea

\smallskip

Let $\mathcal{B}_\e$ is the  operator introduced in Theorem \ref{thm-bog}, we define the test function
\be\label{test-Be}
\varphi:=\mathcal{B}_\e\left(\vr_\e^{2\gamma-3}-\frac{1}{|\O_\e|}\intOe {\vr_\e^{2\gamma-3}}\right).
\ee
By \eqref{re-ve-space} and $2<\gamma\leq 3$, we have
\be\label{re-space}
\vr_\e^{2\gamma-3} \in L^{\frac{3\g-3}{2\g-3}}(\O_\e),\quad 2 \leq \frac{3\g-3}{2\g-3}<3.
\ee
Then by Theorem \ref{thm-bog}, we have
\ba\label{est-phi}
&\dive \varphi = \vr_\e^{2\gamma-3}-\frac{1}{|\Omega_\e|}\intOe {\vr_\e^{2\gamma-3}}\quad \mbox{in}\ \O_\e,\\
&\|\varphi\|_{W^{1,\frac{3\g-3}{2\g-3}}_0(\O_\e; R^3)}\leq C\,(1+\e^{\sigma_1}) \left(\|\vr_\e^{2\g-3}\|_{L^{\frac{3\g-3}{2\g-3}}(\O_\e)}+ \|\vr_\e^{2\g-3} \|_{L^1(\O_\e)}\right) \\
 &\qquad \qquad \qquad \qquad \leq C\, \|\vr_\e\|^{{2\g-3}}_{L^{3\gamma-3}(\O_\e)},
\ea
where
$$
\sigma_1:=\frac{\left(3-\frac{3\g-3}{2\g-3}\right)\a-3}{\frac{3\g-3}{2\g-3}}=\frac{2\g-3}{\g-1} \left(\frac{\g-2}{2\g-3}\cdot \a-1\right)>0,
$$
for which the positivity is guaranteed by condition \eqref{res-g-a}.

Taking $\vp$ as a test function in the weak formulation of the momentum equation \eqref{wk-sl3} gives
\be\label{prr}
\intOe{p(\vr_\e)\vr_\e^{2\g-3}} = \sum_{ j=1}^4 I_j
\ee
with
\ba\label{Ij}
&I_1:=\intOe{p(\vr_\e)} \ \frac{1}{|\O_\e|}\intOe {\vr_\e^{2\gamma-3}}, \\
&I_2:=\intOe{\mu \nabla \vu_\e:\nabla \varphi}+\intOe{\left(\frac{\mu}{3} + \eta \right) \dive \vu_\e \dive \varphi},\\
&I_3:=-\intOe{\vr_\e \vu_\e\otimes\vu_\e:\nabla \varphi},\\
& I_4:=-\intOe{(\vr_\e \vf +\vg)\cdot\varphi}.\nn
\ea

For $I_1$:
\ba\label{I1}
&I_1:=\intOe{p(\vr_\e)} \ \frac{1}{|\O_\e|}\intOe {\vr_\e^{2\gamma-3}}\leq C\,\|\vr_\e\|_{L^\g(\O_\e)}^\g \|\vr_\e\|_{L^{2\g-3}(\O_\e)}^{2\g-3} \\
&\quad \leq C\,\left(\|\vr_\e\|_{L^{1}(\O_\e)}^{(1-\th_1)\g}\|\vr_\e\|_{L^{3\g-3}(\O_\e)}^{\th_1\g} \right) \left(\|\vr_\e\|_{L^{1}(\O_\e)}^{(1-\th_2)(2\g-3)}\|\vr_\e\|_{L^{3\g-3}(\O_\e)}^{\th_2(2\g-3)}\right)\\
&\quad \leq C\,M^{(1-\th_1)\g+(1-\th_2)(2\g-3)} \|\vr_\e\|_{L^{3\g-3}(\O_\e)}^{\th_1\g+\th_2(2\g-3)}\\
&\quad \leq C\,\left( 1 + \|\vr_\e\|_{L^{3\g-3}(\O_\e)}^{\max\{\th_1,\th_2\}(3\g-3)}\right),\nn
\ea
where we used \eqref{mass}, Young's inequality, and interpolations between Lebesgue spaces:
$$
0<\th_1,\th_2<1\ \  \mbox{s.t.}\ \  \frac{1}{\g}=(1-\th_1)+\frac{\th_1}{3\g-3},\quad  \frac{1}{2\g-3}=(1-\th_2)+\frac{\th_2}{3\g-3}.
$$

For $I_2$:
\ba\label{I2}
I_2 &\leq  C\, \|\nabla \vu_\e\|_{L^2(\O_\e; R^9)}\|\nabla\varphi\|_{L^2(\O_\e; R^9)}\leq C\, \left( \|\vr_\e \|_{L^{\frac{6}{5}}(\O_\e)} +1\right)\|\nabla\varphi\|_{L^{\frac{3\g-3}{2\g-3}}(\O_\e; R^9)} \\
&\leq C\, \left( \|\vr_\e \|_{L^{\frac{6}{5}}(\O_\e)} +1\right)\|\vr_\e\|_{L^{3\g-3}(\O_\e)}^{{2\g-3}}\\
&\leq C\, \left( M^{(1-\th_3)} \|\vr_\e \|_{L^{3\g-3}(\O_\e)}^{\th_3}+1  \right)  \|\vr_\e \|_{L^{3\g-3}(\O_\e)}^{2\g-3}\\
&\leq C \left( 1 +  \|\vr_\e \|_{L^{3\g-3}(\O_\e)}^{2\g-2} \right),\nn
\ea
where we used \eqref{ub3}, \eqref{re-space} and \eqref{est-phi}. The number $0<\th_3<1$ is determined by
$$
 \frac{5}{6}=(1-\th_3)+\frac{\th_3}{3\g-3}.
$$

For $I_3$:
\ba\label{I3}
I_3 &\leq  C\, \|\vr_\e\|_{L^{3\g-3}(\O_\e)} \| \vu_\e\|_{L^6(\O_\e; R^3)}^2 \|\nabla\varphi\|_{L^{\frac{3\g-3}{2\g-3}}(\O_\e; R^9)}\\
&\leq C\, \|\vr_\e\|_{L^{3\g-3}(\O_\e)} \left(1+\| \vr_\e\|_{L^{6/5}(\O_\e)}^2 \right) \|\vr_\e\|_{L^{3\g-3}(\O_\e)}^{2\g-3}\\
&\leq C\, \|\vr_\e\|_{L^{3\g-3}(\O_\e)}^{2\g-2} \left( 1 + \| \vr_\e\|_{L^{3\g-3}(\O_\e)}^{2\th_4} \right)\\
&\leq C\, \left( 1 + \|\vr_\e\|_{L^{3\g-3}(\O_\e)}^{(3\g-3)-(\g-1-2\th_4)}  \right),\nn
\ea
where
$$
0<\th_4<1 \quad \mbox{s.t.}\quad \frac{5}{6}=(1-\th_4)+\frac{\th_4}{3\g-3}.
$$
This implies
$$
\th_4=\frac{\g-1}{2(3\g-4)},\quad (\g-1-2\th_4)= (\g-1)\ \frac{3\g-5}{3\g-4}>0.
$$

For $I_4$:
\be\label{I4}
I_4\leq  C\, \left(\|\vr_\e\|_{L^{2}(\O_\e)}+1\right) \| \vp\|_{L^2(\O_\e; R^3)} \leq  C\, \left(1+\|\vr_\e\|_{L^{3\g-3}(\O_\e)}^{2\g-2}\right).\nn
\ee

Summing up the estimates in \eqref{prr}-\eqref{I4} implies
\be\label{est-prr}
\|\vr_\e\|_{L^{3\g-3}(\O_\e)}^{3\g-3}   \leq  C\, \left(1+\|\vr_\e\|_{L^{3\g-3}(\O_\e)}^{\b_1(\g)}\right),\nn
\ee
for some $\b_1(\g)<3\g-3$. Then we deduce
\be\label{est-r-u0}
\|\vr_\e\|_{L^{3\g-3}(\O_\e)}  \leq  C;\quad \mbox{moreover, by \eqref{ub3},}\  \|\vu_\e\|_{W_0^{1,2}(\O_\e; R^3)} \leq C.
\ee

Let $[\tilde \vr_\e,\tilde \vu_\e]$ be the zero extension of $[\vr_\e, \vu_\e]$ in $\O$. Then by \eqref{est-r-u0} we have
\be\label{est-r-u-tilde}
\|\tilde \vr_\e\|_{L^{3\g-3}(\O)}+   \|\tilde \vu_\e\|_{W_0^{1,2}(\O; R^3)}\leq  C.
\ee
Thus, up to a substraction of subsequence,
\be\label{r-u-limit0}
\tilde \vr_\e  \to \vr \ \mbox{weakly in} \ L^{3\g-3}(\O),\quad \tilde \vu_\e \to \vu \   \mbox{weakly in}\  W_{0}^{1,2}(\O; R^3).
\ee

We obtained the uniform estimate \eqref{est-r-u} and the weak convergence in \eqref{lim-r-u}.

\subsection{Equations in homogeneous domain}

In this section, we deduce the equations in $[\tilde \vr_\e,\tilde \vu_\e]$ and $[\vr,\vu]$ in the homogeneous domain $\O$.

First, the fact that $[\tilde \vr_\e,\tilde \vu_\e]$ is a renormalized weak solution (see Lemma \ref{lem:renormal}) implies that  $[\tilde \vr_\e,\tilde \vu_\e]$ solves \eqref{eq-renomal}.

Next we claim that the couple $[\tilde\vr_\e,\tilde \vu_\e]$ solves the same momentum equations as \eqref{i2} in $\O$ up to a small remainder:
\begin{lemma}\label{lem:momentum}
Under the assumptions in {\rm Theorem \ref{thm-hom}},  there holds
 \be\label{eq-monmentum}
\dive(\tilde\vr_\e\tilde \vu_\e\otimes\tilde \vu_\e)+\nabla p(\tilde\vr_\e)=\dive\SSS(\nabla \tilde\vu_\e)+\tilde\vr_\e \vf + \vg+{\rm r}_\e,\quad \mbox{in}\ \mathcal{D}'(\O; R^3),
 \ee
where the distribution ${\rm r}_\e$ is small in the following sense:
\be\label{est-re}
|\langle r_\e,\varphi\rangle_{\mathcal{D}'(\O; R^3),\mathcal{D}(\O; R^3)}|\leq C\, \e^{\de_1} \left(\|\nabla\varphi\|_{L^{\frac{3\g-3}{2\g-3}+\delta_0}(\O_\e; R^9)} + \|\vp\|_{L^{r_1}(\O; R^3)}\right),
\ee
where $\de_0>0$ is chosen such that \eqref{est-Ie1-2} is satisfied, $1<r_1<\infty$ is determined by \eqref{def-de-r1} and $\de_1>0$ is defined in \eqref{def-de1} later on.
\end{lemma}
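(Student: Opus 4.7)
The plan is to produce the identity \eqref{eq-monmentum} by testing the weak formulation \eqref{wk-sl3} on $\Omega_\e$ against a suitably cut-off version of any $\vp\in C_c^\infty(\Omega;R^3)$. Specifically, using the $\phi_{\e,n}$ from \eqref{A2}, I would set $g_\e := \sum_{n=1}^{N(\e)} \phi_{\e,n}$, so that $g_\e \equiv 1$ on every hole $T_{\e,n}$ and $\supp g_\e \subset \bigcup_n B(x_{\e,n},\de_0\e^\a)$. Then $(1-g_\e)\vp$ vanishes on the holes and on $\partial\Omega$, hence (after the standard density argument) is an admissible test function in \eqref{wk-sl3} on $\Omega_\e$.

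Expanding via Leibniz,
\[
\nabla\bigl[(1-g_\e)\vp\bigr]=(1-g_\e)\nabla\vp-\vp\otimes\nabla g_\e,\qquad \dive\bigl[(1-g_\e)\vp\bigr]=(1-g_\e)\dive\vp-\vp\cdot\nabla g_\e,
\]
substituting into \eqref{wk-sl3}, and rewriting each integral as one over $\Omega$ with $\tilde\vr_\e,\tilde\vu_\e$ extended by zero (using $(1-g_\e)|_{T_{\e,n}}=0$), one obtains \eqref{eq-monmentum} with the remainder $r_\e=r_\e^{(i)}+r_\e^{(ii)}$ where
\[
\langle r_\e^{(i)},\vp\rangle=\int_\Omega g_\e\Bigl[\tilde\vr_\e\tilde\vu_\e\otimes\tilde\vu_\e:\nabla\vp+p(\tilde\vr_\e)\dive\vp-\SSS(\nabla\tilde\vu_\e):\nabla\vp+(\tilde\vr_\e\vf+\vg)\cdot\vp\Bigr]\,\dx,
\]
and $\langle r_\e^{(ii)},\vp\rangle$ collects the terms $\int_\Omega [\tilde\vr_\e(\tilde\vu_\e\otimes\tilde\vu_\e)\nabla g_\e-p(\tilde\vr_\e)\nabla g_\e-\SSS(\nabla\tilde\vu_\e)\nabla g_\e]\cdot\vp\,\dx$.

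The quantitative estimate \eqref{est-re} then follows from the uniform bounds \eqref{est-r-u0} (giving $\|\tilde\vr_\e\|_{L^{3\g-3}}+\|\tilde\vu_\e\|_{L^6}+\|\nabla\tilde\vu_\e\|_{L^2}+\|p(\tilde\vr_\e)\|_{L^{(3\g-3)/\g}}+\|\tilde\vr_\e\tilde\vu_\e\otimes\tilde\vu_\e\|_{L^{(3\g-3)/\g}}\leq C$) combined with the elementary size estimates for $g_\e$: since the balls $B(x_{\e,n},\de_0\e^\a)$ are disjoint and number at most $C\e^{-3}$,
\[
\|g_\e\|_{L^s(\Omega)}\leq C\,\e^{3(\a-1)/s},\qquad \|\nabla g_\e\|_{L^s(\Omega)}\leq C\,\e^{3(\a-1)/s-\a},
\]
for any $s\in[1,\infty]$. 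Applying Hölder to each of the terms in $r_\e^{(i)}$ (extracting $g_\e$ in a high $L^{s_1}$ norm) and in $r_\e^{(ii)}$ (extracting $\nabla g_\e$ in $L^{s_2}$ with $s_2<3-3/\a$) produces a positive power of $\e$ in every piece, provided the remaining factors involving $\vp$ lie in suitable Lebesgue spaces. The exponent $\de_0>0$ is dictated by the requirement that $\nabla\vp$ be tested in $L^{(3\g-3)/(2\g-3)+\de_0}$ (the dual exponent needed to pair with $p(\tilde\vr_\e)g_\e$), while $r_1$ is fixed so that $\vp\in L^{r_1}$ can absorb the $\nabla g_\e$ terms.

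The crux — and the only place where the technical hypothesis \eqref{res-g-a} enters — is the pressure remainder $\int_\Omega p(\tilde\vr_\e)\vp\cdot\nabla g_\e\,\dx$. Here $p(\tilde\vr_\e)$ only sits uniformly in $L^{(3\g-3)/\g}$, while $\nabla g_\e$ carries a large factor $\e^{-\a}$. Choosing the Hölder triple $\bigl((3\g-3)/\g,\,s_2,\,r_1\bigr)$ with $s_2$ just below $3-3/\a$, the net exponent of $\e$ is
\[
\frac{3(\a-1)}{s_2}-\a,
\]
and one can verify (using $\g>2$ and \eqref{res-g-a}) that a valid choice of $s_2$ yielding a strictly positive exponent $\de_1>0$ exists precisely when $\a(\g-2)/(2\g-3)>1$; this is exactly \eqref{res-g-a}. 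All other remainder terms (convective, viscous, forcing, and the $r_\e^{(i)}$-family) are strictly easier: either the integrand enjoys better integrability than the pressure, or $g_\e$ (without derivative) gives an immediate gain of $\e^{3(\a-1)/s_1}$. Collecting all partial exponents and taking $\de_1$ to be their minimum completes the proof.
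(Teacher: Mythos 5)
Your proposal is correct and follows essentially the same strategy as the paper: build a cut-off that vanishes on the holes, test \eqref{wk-sl3} on $\O_\e$ against the truncated test function, and absorb the discrepancy into a remainder controlled by the $L^s$-sizes of the cut-off and its gradient, with condition \eqref{res-g-a} entering precisely to beat the $\e^{-\a}$ from $\nabla g_\e$ against the limited integrability $L^{(3\g-3)/\g}$ of the pressure. The only cosmetic difference is that your $g_\e=\sum_n\phi_{\e,n}$ is the complement ($1-g_\e$ in the paper's notation) of the cut-off the paper uses, so your test function $(1-g_\e)\vp$ is the paper's $g_\e\vp$; your split into $r_\e^{(i)}$ (no derivative on the cut-off) and $r_\e^{(ii)}$ ($\nabla g_\e$ terms) mirrors the paper's decomposition $I_\e=\sum_j I_{j,\e}$ termwise. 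One small imprecision worth flagging: the convective remainder $\tilde\vr_\e\tilde\vu_\e\otimes\tilde\vu_\e$ is \emph{not} strictly better integrable than the pressure -- both sit exactly in $L^{(3\g-3)/\g}$ -- so that term requires the same Hölder triple and the same positivity check as the pressure term, rather than being ``strictly easier''; the paper indeed estimates $I_{1,\e}$ and $I_{2,\e}$ with the same exponent $\de_1$. This does not affect the validity of the argument.
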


\begin{proof}[Proof of {\rm Lemma \ref{lem:momentum}}]

By the assumption on the distribution and size of the holes in \eqref{dis-holes}, there exists $g_\e\in C^\infty(\O)$ satisfying $0\leq g_\e \leq 1$ and
\be\label{def-ge1}
g_\e =0 \ \mbox{on}\ \bigcup_{n=1}^{N(\e)}  T_{\e,n},\quad g_\e=1 \  \mbox{in} \ \O\setminus \bigcup_{n=1}^{N(\e)} \overline{B(x_{\e,n},\de_0\e^\a)},\quad \|\nabla g_\e\|_{L^\infty(\O; R^3)}\leq C\, \e^{-\alpha}.
\ee
Direct calculation gives that for any $1\leq q\leq \infty$:
\be\label{def-ge2}
\|1-g_\e\|_{L^q(\O)}\leq C\, \e^{\frac{3(\alpha-1)}{q}},\quad \|\nabla g_\e\|_{L^q(\Omega; R^3)}\leq C\ \e^{\frac{3(\alpha-1)}{q}-\alpha}.
\ee

Then for any $\varphi\in C_c^\infty (\O; R^3)$, we have
\ba\label{wk-eq-tvu}
&\intO{\tilde\vr_\e\tilde \vu_\e\otimes\tilde \vu_\e:\nabla \varphi+ p(\tilde\vr_\e) \,\dive \varphi-\SSS(\nabla \tilde\vu_\e):\nabla \varphi + \tilde\vr_\e  \vf \cdot \varphi + \vg \cdot \varphi}\\
&\quad =\int_{\O_\e}\Big(\tilde\vr_\e\tilde \vu_\e\otimes\tilde \vu_\e:\nabla (\varphi g_\e)+ p(\tilde\vr_\e)\, \dive (\varphi g_\e)-\SSS(\nabla \tilde\vu_\e):\nabla (\varphi g_\e) \\
&\qquad\qquad + \tilde\vr_\e \vf \cdot (\varphi g_\e) + \vg \cdot (\varphi g_\e)\Big)\,\dx + I_\e\\
&\quad =I_\e,\nn
\ea
where we used the fact $\vp g_\e \in C_c^\infty (\O_\e; R^3)$ is a good test function for the momentum equations \eqref{i2} in $\O_\e$, and the quantity $I_\e$ is of the form
\ba\label{def-Ie}
&I_\e:=\sum_{j=1}^4 I_{j,\e},\quad \mbox{with:}\\
&I_{1,\e}:=  \intO{ \tilde \vr_\e \tilde \vu_\e\otimes \tilde \vu_\e:(1-g_\e)\nabla \varphi -\tilde \vr_\e \tilde \vu_\e\otimes \tilde \vu_\e:(\nabla g_\e\otimes  \varphi) },\\
&I_{2,\e}:= \intO{  p(\tilde \vr_\e)(1-g_\e)\dive \varphi -p(\tilde \vr_\e) \nabla g_\e\cdot \varphi},\\
&I_{3,\e}:=  \intO{ -\SSS(\nabla \tilde \vu_\e):(1-g_\e)\nabla \varphi +\SSS(\nabla \tilde \vu_\e):(\nabla g_\e\otimes  \varphi)},\\
&I_{4,\e}:= \intO{  \tilde \vr_\e  \vf \cdot (1-g_\e)\varphi +\vg \cdot (1-g_\e)  \varphi}.
\ea
We now estimate $I_{j,\e}$ one by one. For $I_{1,\e}$, direct calculation gives
\ba\label{est-Ie1}
|I_{1,\e}| &\leq  C\, \|\vr_\e\|_{L^{3\g-3}(\O_\e)} \| \vu_\e\|_{L^6(\O_\e; R^3)}^2 \left( \|(1-g_\e)\nabla\varphi\|_{L^{\frac{3\g-3}{2\g-3}}(\O_\e; R^9)}+ \|\nabla g_\e\otimes \varphi\|_{L^{\frac{3\g-3}{2\g-3}}(\O_\e; R^9)}\right)\\
&\quad \leq C\, \left(\|1-g_\e\|_{L^{r_1}(\O)}\|\nabla\varphi\|_{L^{\frac{3\g-3}{2\g-3}+\delta_0}(\O_\e; R^9)} + \|\nabla g_\e\|_{L^{\frac{3\g-3}{2\g-3}+\delta_0}(\O_\e; R^3)} \|\vp\|_{L^{r_1}(\O; R^3)} \right),\nn
\ea
where
\be\label{def-de-r1}
0<\de_0<1, \quad 1<r_1<\infty,\quad \frac{1}{r_1}+ \left(\frac{3\g-3}{2\g-3}+\delta_0\right)^{-1}=\frac{2\g-3}{3\g-3}.
\ee
By \eqref{def-ge2}, we have
\be\label{est-Ie1-0}
\|1-g_\e\|_{L^{r_1}(\O)}\leq C\, \e^{\frac{3(\alpha-1)}{r_1}}, \quad \|\nabla g_\e\|_{L^{\frac{3\g-3}{2\g-3}+\de_0}(\O; R^3)}\leq C\, \e^{3(\alpha-1)\left(\frac{3\g-3}{2\g-3}+\delta_0\right)^{-1}-\alpha}.
\ee
We calculate
\be\label{est-Ie1-1}
3(\alpha-1)\left(\frac{3\g-3}{2\g-3}\right)^{-1}-\alpha = \frac{\a\g-2\a-2\g+3}{\g-1}>0,
\ee
where we used the condition \eqref{res-g-a} which is equivalent to
$$
\a\g-2\a-2\g+3>0.
$$

Then by \eqref{est-Ie1-0} and \eqref{est-Ie1-1}, we can choose $\de_0>0$ small enough such that
\be\label{est-Ie1-2}
3(\alpha-1)\left(\frac{3\g-3}{2\g-3}+\delta_0\right)^{-1}-\alpha =:h(\de_0)>0.
\ee

We finally obtain
\be\label{est-Ie1-f}
|I_{1,\e}| \leq C\, \e^{\de_1} \left(\|\nabla\varphi\|_{L^{\frac{3\g-3}{2\g-3}+\delta_0}(\O_\e; R^9)} + \|\vp\|_{L^{r_1}(\O; R^3)}\right),
\ee
where
\be\label{def-de1}
 \de_1:=\min\left\{\frac{3(\alpha-1)}{r_1},h(\de_0)\right\}>0,
\ee
 where $\de_0>0$ is chosen such that \eqref{est-Ie1-2} is satisfied and $1<r_1<\infty$ is determined by \eqref{def-de-r1}.

\medskip

For $I_{2,\e}$, similar as the estimate for $I_{1,\e}$, we have
\ba\label{est-Ie2}
|I_{2,\e}| &\leq  C\, \|p(\tilde \vr_\e)\|_{L^{\frac{3\g-3}{\g}}(\O_\e)}\left( \|(1-g_\e)\dive \varphi\|_{L^{\frac{3\g-3}{2\g-3}}(\O_\e)}+ \|\nabla g_\e\cdot  \varphi\|_{L^{\frac{3\g-3}{2\g-3}}(\O_\e)}\right)\\
&\quad \leq C\, \e^{\de_1} \left(\|\nabla\varphi\|_{L^{\frac{3\g-3}{2\g-3}+\delta_0}(\O_\e; R^9)} + \|\vp\|_{L^{r_1}(\O; R^3)}\right).
\ea

\medskip

For $I_{3,\e}$ and $I_{4,\e}$, the similar argument gives the following non-optimal estimate:
\ba\label{est-Ie3}
|I_{3,\e}|+|I_{4,\e}|  \leq C\, \e^{\de_1} \left(\|\nabla\varphi\|_{L^{\frac{3\g-3}{2\g-3}+\delta_0}(\O_\e; R^9)} + \|\vp\|_{L^{r_1}(\O; R^3)}\right).
\ea

\medskip

Summing up the estimates in \eqref{est-Ie1-f},  \eqref{est-Ie2} and \eqref{est-Ie3} implies \eqref{est-re}.

\end{proof}

\subsection{The limit equations}

This section is devoted to deduce the equations in the limit couple $[\vr,\vu]$ obtained in \eqref{r-u-limit0}. First of all, by compact Sobolev embedding, we have
\be\label{st-con-vue}
\tilde\vu_\e \to \vu \quad \mbox{strongly in}\quad L^q(\Omega; R^3) \quad \mbox{for any $1\leq q<6$}.
\ee
Thus, there holds the weak convergence of nonlinear terms:
\ba\label{wl-ru1}
& \tilde\vr_\e \tilde\vu_\e \to \vr\vu \quad &&\mbox{weakly in}\quad L^q(\O; R^3) \quad \mbox{for any $1< q<\frac{6\gamma-6}{\gamma+1}$},\\
& \tilde\vr_\e \tilde\vu_\e\otimes\tilde\vu \to \vr\vu\otimes\vu \quad &&\mbox{weakly in}\quad L^q(\O; R^9) \quad \mbox{for any $1< q<\frac{3\gamma-3}{\gamma}$}.
\ea

Then passing $\e\to 0$ in \eqref{eq-renomal} and in \eqref{eq-monmentum} gives
\ba\label{eq-vu-vr-0}
&\dive (\vr \vu) = 0,\\
&\dive (\vr \vu \otimes \vu) +\nabla  \overline {p(\vr)} =\dive \SSS(\nabla \vu)+\vr \vf+ \vg,
\ea
in the sense of distribution in ${\mathcal D}'(\O)$, where $\overline {p(\vr)}$ is the weak limit of $p(\tilde \vr_\e)$ in $L^{\frac{3\g-3}{\g}}(\O)$. Moreover, by Remark \ref{rem:hom2}, $[\vr,\vu]$ satisfies the renomalized equation
\be\label{eq-renomal-lim}
\dive \big( b(\vr)\vu \big) + \big( \vr b'(\vr) - b(\vr) \big) \dive \vu = 0, \quad \mbox{in}\ \mathcal{D}'( R^3),
\ee
where $b\in C^0([0,\infty))\cap C^1((0,\infty))$ satisfies \eqref{b-pt1}-\eqref{b-pt2}.

To finish the proof of Theorem \ref{thm-hom}, it is left to show $\overline {p(\vr)} = p(\vr)$. This is done in the next section.

\subsection{Convergence of pressure term - end of the proof}

We introduce the so-called effective viscous flux $p(\vr)-(\frac{4\mu}{3}+\eta)\dive \vu$ enjoying some compactness property given in the following lemma. This property plays a crucial role in the existence theory of weak solutions for the compressible Navier-Stokes equations.

\begin{lemma}\label{lem:flux}
Up to a substraction of subsequence, there holds for any $\psi\in C_c^\infty(\Omega)$:
\be\label{flux}
\lim_{\e\to 0}\intO{\psi \left(p(\tilde\vr_\e)-(\frac{4\mu}{3}+\eta)\dive \tilde\vu_\e\right)\tilde\vr_\e}=\intO{\psi\left(\overline{p(\vr)}-(\frac{4\mu}{3}+\eta)\dive \vu\right)\vr}.
\ee
\end{lemma}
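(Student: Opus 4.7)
The plan is to adapt the classical effective viscous flux identity of Lions \cite{Lions-C} and Feireisl (cf.\ \cite{N-book}) to the perforated setting, treating the approximate momentum balance \eqref{eq-monmentum} as the limit equation \eqref{eq-vu-vr-0} augmented by the small remainder $r_\e$ from Lemma \ref{lem:momentum}. For a fixed $\psi\in C_c^\infty(\O)$, the natural test functions are
\[
\varphi_\e:=\psi\,\nabla\Delta^{-1}\bigl[\mathbf 1_{\O}\tilde\vr_\e\bigr],\qquad \varphi:=\psi\,\nabla\Delta^{-1}\bigl[\mathbf 1_{\O}\vr\bigr],
\]
where $\Delta^{-1}$ denotes convolution with the Newton kernel on $R^3$, so that $\dive\varphi_\e=\psi\tilde\vr_\e+\nabla\psi\cdot\nabla\Delta^{-1}[\mathbf 1_{\O}\tilde\vr_\e]$ (and analogously for $\varphi$). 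By Calder\'on--Zygmund theory and the bound $\|\tilde\vr_\e\|_{L^{3\g-3}(\O)}\le C$ from \eqref{est-r-u-tilde}, the sequence $\varphi_\e$ is uniformly bounded in $W^{1,3\g-3}_{\rm loc}(R^3)$, and, by the compact embedding $W^{1,3\g-3}\hookrightarrow L^q$ for any $q<\infty$, $\nabla\Delta^{-1}[\mathbf 1_\O\tilde\vr_\e]\to\nabla\Delta^{-1}[\mathbf 1_\O\vr]$ strongly in $L^q_{\rm loc}(R^3)$.

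Testing \eqref{eq-monmentum} against $\varphi_\e$ and \eqref{eq-vu-vr-0} against $\varphi$, the pressure contribution yields $\intO{\psi\,p(\tilde\vr_\e)\tilde\vr_\e}$ and $\intO{\psi\,\overline{p(\vr)}\vr}$; the divergence piece of the viscous stress, combined via the identity $\dive\nabla\Delta^{-1}=\id$ and the symmetry relations for $\SSS$, produces exactly $(\tfrac{4\mu}{3}+\eta)\intO{\psi\,\dive\tilde\vu_\e\,\tilde\vr_\e}$ and its $[\vr,\vu]$-analogue (the standard Feireisl rearrangement). All remaining terms either involve the factor $\nabla\psi$, in which case the strong convergence of $\nabla\Delta^{-1}[\mathbf 1_\O\tilde\vr_\e]$ in $L^q_{\rm loc}$ combined with the weak convergence of the other factor is enough, or they are exterior forcing terms $\intO{(\tilde\vr_\e\vf+\vg)\cdot\varphi_\e}$ that converge trivially.

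The essential difficulty lies in the convective term $\intO{\tilde\vr_\e\tilde\vu_\e\otimes\tilde\vu_\e:\nabla\varphi_\e}$. Since $\dive(\tilde\vr_\e\tilde\vu_\e)=0$ holds in $\mathcal D'(R^3)$ by Lemma \ref{lem:renormal}, while $\nabla\Delta^{-1}[\mathbf 1_\O\tilde\vr_\e]$ is curl-free, this is the natural setting for the Coifman--Meyer/div--curl commutator estimate used in \cite[Chap.~4]{N-book}: one shows that the weak $L^s$-limit of $\tilde\vr_\e\tilde\vu_\e\otimes\tilde\vu_\e:\nabla\varphi_\e$ equals the corresponding expression with $[\vr,\vu,\varphi]$, using the strong convergence \eqref{st-con-vue} of $\tilde\vu_\e$ in $L^q(\O)$ for $q<6$ together with the commutator identification between Riesz transforms and weakly convergent factors. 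This commutator step is the main obstacle; the hypothesis $\g>2$ is used here to place $\tilde\vr_\e\tilde\vu_\e\otimes\tilde\vu_\e$ in $L^{s}$ with $s>1$, so that the commutator lemma applies.

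It remains to check that $\langle r_\e,\varphi_\e\rangle\to 0$. By \eqref{est-re},
\[
|\langle r_\e,\varphi_\e\rangle|\le C\e^{\de_1}\Bigl(\|\nabla\varphi_\e\|_{L^{\frac{3\g-3}{2\g-3}+\de_0}(\O_\e;R^9)}+\|\varphi_\e\|_{L^{r_1}(\O;R^3)}\Bigr).
\]
Since $\g>2$ implies $3\g-3>\tfrac{3\g-3}{2\g-3}$, choosing $\de_0>0$ sufficiently small and using continuity of the Riesz operators on $L^{\frac{3\g-3}{2\g-3}+\de_0}$ together with Sobolev embedding keeps the bracket uniformly bounded, so the factor $\e^{\de_1}$ sends the remainder to zero. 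Assembling the pieces and passing to the limit delivers \eqref{flux}.
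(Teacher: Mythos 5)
Your proposal follows essentially the same route as the paper: the same Riesz-potential test functions $\psi\nabla\Delta^{-1}[\mathbf 1_\O\tilde\vr_\e]$ and $\psi\nabla\Delta^{-1}[\mathbf 1_\O\vr]$, the estimate on $\langle r_\e,\varphi_\e\rangle$ via \eqref{est-re}, the identification of the convective term as the only delicate contribution, and its treatment by combining the continuity equation with the div--curl/Riesz commutator lemma (the paper's Lemma \ref{lem-ct}) together with the strong convergence \eqref{st-con-vue}. The argument is sound and matches the paper's proof in structure and key ingredients.
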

\begin{proof}[Proof of {\rm Lemma \ref{lem:flux}}]
The proof of Lemma \ref{lem:flux} is quite tedious but nowadays well understood. The main idea is to employ proper test functions by employing Fourier multiplier and Riesz type operators. We refer to Section 1.3.7.2 in \cite{N-book} or Section 10.16 in \cite{F-N-book} for the definitions and properties used here of Fourier multiplier and Riesz operators.  These proper test functions are defined by
\be\label{def-test-flux}
\psi \nabla \Delta^{-1}(1_{\O}\tilde\vr_\e), \quad  \psi \nabla \Delta^{-1}(1_{\O}\vr),
\ee
where $\psi\in C_c^\infty(\O)$ and $\Delta^{-1}$ is the Fourier multiplier on $ R^3$ with symbol $-\frac{1}{|\xi|^2}$.

We observe that
$$
\nabla \nabla \Delta^{-1}=\left(\mathcal{R}_{i,j}\right)_{1\leq i,j\leq 3}
$$
are the classical Riesz operators. Then for any $f\in L^q( R^3),~1<q<\infty$:
\be\label{est-test-flux1}
\|\nabla\nabla \Delta^{-1}(f)\|_{L^q( R^3; R^9)}\leq C\, \|f\|_{L^q( R^3)}.\nn
\ee
By the embedding theorem in homogeneous Sobolev spaces (see Theorem 1.55 and Theorem 1.57 in \cite{N-book} or Theorem 10.25 and Theorem 10.26 in \cite{F-N-book}), we have for any $f\in L^q( R^3), \ \supp\, f\subset \O$:
\ba\label{est-test-flux2}
&\|\nabla \Delta^{-1}(f)\|_{L^{q^*}( R^3; R^3)}\leq C\, \| f \|_{L^q( R^3)}\quad \frac{1}{q^*}=\frac{1}{q}-\frac{1}{3}, \ \mbox{if $1<q<3$},\\
&\|\nabla \Delta^{-1}(f)\|_{L^{q^*}( R^3; R^3)}\leq C\, \| f \|_{L^q( R^3)}\quad \mbox{for any} \ q^*<\infty,  \mbox{if $q\geq3$}.\nn
\ea
Then by the uniform estimate for $\tilde \vr_\e$ and its weak limit $\vr$ in \eqref{r-u-limit0} and the fact $3\g-3>3$ under our assumption $\g>2$, we have for any $q<\infty$:
\ba\label{est-test-flux3}
&\|\psi \nabla \Delta^{-1}(1_{\O}\tilde\vr_\e)\|_{L^q(\O; R^3)} + \|\psi \nabla \Delta^{-1}(1_{\O}\vr)\|_{L^q(\O; R^3)}\leq C,\\
&\|\nabla \left(\psi \nabla \Delta^{-1}(1_{\O}\tilde\vr_\e)\right)\|_{L^{3\g-3}(\O; R^9)} + \|\nabla \left(\psi \nabla \Delta^{-1}(1_{\O}\vr)\right)\|_{L^{3\g-3}(\O; R^9)}\leq C.
\ea

Since $2<\g\leq 3$, we have $ 3\g-3 >\frac{3\g-3}{2\g-3} $. Then choosing $\de_0>0$ in Lemma \ref{lem:momentum} to be small enough, we have
$$
3\g-3 \geq \frac{3\g-3}{2\g-3}+\de_0.
$$
Thus, \eqref{est-re} and \eqref{est-test-flux3} implies
\ba\label{est-test-flux4}
&|\langle r_\e,\psi \nabla \Delta^{-1}(1_{\O}\tilde\vr_\e)\rangle_{\mathcal{D}'(\O; R^3),\mathcal{D}(\O; R^3)}|\\
&\leq C\, \e^{\de_1} \left(\|\nabla \left(\psi \nabla \Delta^{-1}(1_{\O}\tilde\vr_\e)\right)\|_{L^{3\g-3}(\O_\e; R^9)} + \|\psi \nabla \Delta^{-1}(1_{\O}\tilde\vr_\e)\|_{L^{r_1}(\O; R^3)}\right)\\
&\leq C\,\e^{\de_1},\nn
\ea
which goes to zero as $\e\to 0$.

\smallskip

Now we chose $\psi \nabla \Delta^{-1}(1_{\Omega}\tilde\vr_\e)$ as a test functions in the weak formulation of equation \eqref{eq-monmentum} and pass $\e\to 0$. Then we choose $\psi \nabla \Delta^{-1}(1_{\Omega}\vr)$ as a test functions in the weak formulation of $\eqref{eq-vu-vr-0}_2$. By comparing the results of theses two operations, through long but straightforward calculations, we obtain that
\ba\label{flux-l1}
I:&=\lim_{\e\to 0}\intO{\psi \left(p(\tilde\vr_\e)-(\frac{4\mu}{3}+\eta)\Div \tilde\vu_\e\right)\tilde\vr_\e}-\intO{\psi\left(\overline{p(\vr)}-(\frac{4\mu}{3}+\eta)\Div \vu\right)\vr}\\
&=\lim_{\e\to 0}\intO{\tilde\vr_\e\tilde\vu_\e^i\tilde\vu_\e^j \psi \mathcal{R}_{i,j}(1_{\Omega} \tilde\vr_\e)}-\intO{\vr \vu^i\vu^j \psi \mathcal{R}_{i,j}(1_{\Omega} \vr)}.
\ea

On the other hand, choosing $1_{\Omega} \nabla \Delta^{-1}(\psi\tilde\vr_\e \tilde\vu_\e)$ as a test function in the weak formulation of \eqref{eq-renomal} with $b(\vr)=\vr$ and  $1_{\Omega} \nabla \Delta^{-1}(\psi\vr \vu)$ as a test function in the weak formulation of $\eqref{eq-vu-vr-0}_1$ implies
\be\label{flux-l2}
\intO{1_{\Omega}\tilde\vr_\e \tilde\vu_\e^i  \mathcal{R}_{i,j}(\psi\tilde\vr_\e \tilde\vu_\e) }=0,\quad \intO{1_{\Omega}\vr \vu^i  \mathcal{R}_{i,j}(\psi \vr \vu)}=0.
\ee

Plugging (\ref{flux-l2}) into (\ref{flux-l1}) yields
\ba\label{flux-l3}
&I=\lim_{\e\to 0}\intO{\tilde\vu_\e^i\Big(\tilde\vr_\e\tilde\vu_\e^j \psi \mathcal{R}_{i,j}(1_{\Omega} \tilde\vr_\e)-1_{\Omega}\tilde\vr_\e   \mathcal{R}_{i,j}(\psi\tilde\vr_\e \tilde\vu_\e)\Big)}\\
&\qquad\qquad-\intO{ \vu^i\Big(\vr \vu^j \psi \mathcal{R}_{i,j}(1_{\Omega} \vr)-1_{\Omega}\vr  \mathcal{R}_{i,j}(\psi \vr \vu)\Big)}.\nn
\ea

We introduce the following lemma, which is a variance of the divergence-curl lemma, and we refer to \cite[Lemma 3.4]{FNP} for the proof.
\begin{lemma}\label{lem-ct}Let $1<p,q<\infty$ satisfy $$\frac{1}{r}:=\frac{1}{p}+\frac{1}{q}<1.$$
Suppose
\[
u_\e \to u \quad\mbox{weakly in}\quad L^p( R^3),\quad v_\e \to v \quad\mbox{weakly in}\quad L^q( R^3),\ \mbox{as $\e \to 0$}.
\]
Then for any  $1\leq i,j\leq 3$:
\[
u_\e \mathcal{R}_{i,j}(v_\e)-v_\e \mathcal{R}_{i,j}(u_\e) \to u \mathcal{R}_{i,j}(v)-v \mathcal{R}_{i,j}(u) \quad\mbox{weakly in}\quad L^r( R^3).
\]
\end{lemma}

The convergence result \eqref{flux} can be deduced by the strong convergence of the velocity in \eqref{st-con-vue} and Lemma \ref{lem-ct}.

\end{proof}

\medskip

A direct consequence of the compactness of the effective viscous flux is the following:
\begin{lemma}\label{lem:flux2} We denote $\overline {p(\vr)\vr}$ as the weak limit of $p(\tilde \vr_\e)\tilde \vr_\e$ in $L^{\frac{3\g-3}{\g+1}}(\O)$. Then $\overline {p(\vr)\vr}=\overline {p(\vr)}\vr$ .

\end{lemma}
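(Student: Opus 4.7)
The plan is to combine the effective viscous flux identity of Lemma~\ref{lem:flux} with an integrated version of the renormalized continuity equation and with the monotonicity of the pressure law $p(s)=as^\gamma$. The rough strategy is: integrate the renormalized equation to produce $\int_\Omega \vr \dive \vu = 0$ for both $[\tilde\vr_\e, \tilde\vu_\e]$ and its limit $[\vr, \vu]$, feed the result into the globally integrated form of Lemma~\ref{lem:flux} to get $\int_\Omega (\overline{p(\vr)\vr} - \overline{p(\vr)}\vr)\,\dx = 0$, and then promote this to pointwise equality via the monotonicity inequality for $p$.

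Concretely, I would first apply the renormalized equations \eqref{eq-renomal} and \eqref{eq-renomal-lim} with the admissible choice $b(s) = s\log s$. Since $b'(s) = \log s + 1$ grows more slowly than any positive or negative power of $s$, the growth hypotheses \eqref{b-pt1}-\eqref{b-pt2} are satisfied (e.g.\ with $\lambda_0 = \lambda_1 = 1/2$, the latter lying below $\beta(\gamma)/2 - 1 = (3\gamma - 5)/2$ because $\gamma > 2$); and one has the convenient identity $sb'(s) - b(s) = s$. Testing \eqref{eq-renomal} and \eqref{eq-renomal-lim} against a cut-off $\psi \in C_c^\infty( R^3)$ equal to $1$ on a neighborhood of $\overline\Omega$, and exploiting that $\tilde\vu_\e$ and $\vu$ vanish outside $\Omega$ so that the transport term $b(\vr)\vu \cdot \nabla\psi$ drops out, yields
$$\int_\Omega \tilde\vr_\e \dive \tilde\vu_\e\,\dx = 0, \qquad \int_\Omega \vr \dive \vu\,\dx = 0.$$
The uniform bounds $\tilde\vr_\e \in L^{3\gamma-3}(\Omega)$ and $\dive \tilde\vu_\e \in L^2(\Omega)$ from \eqref{est-r-u} keep the product uniformly bounded in some $L^q(\Omega)$, $q>1$ (valid since $\gamma > 5/3$), so that $\tilde\vr_\e \dive \tilde\vu_\e \rightharpoonup \overline{\vr\dive\vu}$ weakly in $L^q(\Omega)$, whence $\int_\Omega (\overline{\vr\dive\vu} - \vr\dive\vu)\,\dx = 0$.

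Next, I would extend the effective viscous flux identity of Lemma~\ref{lem:flux} by density from $\psi \in C_c^\infty(\Omega)$ to $\psi \equiv 1$ on $\Omega$ (justified by the uniform $L^{(3\gamma-3)/(\gamma+1)}(\Omega)$ integrability of all terms involved) to obtain
$$\int_\Omega \bigl(\overline{p(\vr)\vr} - \overline{p(\vr)}\vr\bigr)\,\dx = \Big(\tfrac{4\mu}{3}+\eta\Big)\int_\Omega \bigl(\overline{\vr\dive\vu} - \vr\dive\vu\bigr)\,\dx = 0.$$
Finally, by the strict monotonicity of $p(s) = a s^\gamma$, the elementary pointwise inequality $(p(\tilde\vr_\e) - p(\vr))(\tilde\vr_\e - \vr) \geq 0$, combined with the weak convergences $p(\tilde\vr_\e) \rightharpoonup \overline{p(\vr)}$ in $L^{(3\gamma-3)/\gamma}(\Omega)$ and $\tilde\vr_\e \rightharpoonup \vr$ in $L^{3\gamma-3}(\Omega)$ (compatible thanks to $\gamma > 2$), passes to the weak limit to yield $\overline{p(\vr)\vr} - \overline{p(\vr)}\vr \geq 0$ almost everywhere in $\Omega$. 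A non-negative quantity with vanishing integral is zero almost everywhere, and the identity $\overline{p(\vr)\vr} = \overline{p(\vr)}\vr$ follows.

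The step requiring the most care is the combination of the localized flux identity from Lemma~\ref{lem:flux} with the globally integrated renormalized equation: one must justify the passage $\psi \nearrow 1_\Omega$ on both sides, which is straightforward from the $L^q$ and $L^{(3\gamma-3)/(\gamma+1)}$ bounds listed above, but it relies crucially on $\gamma > 2$ (hence $\gamma > 5/3$) to ensure that $\overline{\vr\dive\vu}$ is well defined as an $L^q$ weak limit with $q > 1$ and that the renormalization $b(s) = s\log s$ falls within the admissible class.
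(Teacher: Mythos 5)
Your argument is correct and reaches the same final identity by the same overall strategy as the paper: a renormalized continuity equation with $b(s)=s\log s$, the effective viscous flux identity of Lemma~\ref{lem:flux}, and pointwise monotonicity of $p$. The difference is in how the two proofs handle the degeneration of the cutoff $\psi$ at $\d\O$. The paper stays with $\psi\in C_c^\infty(\O)$ throughout: it uses \eqref{eq:slogs} to rewrite $\int_\O\psi\,\tilde\vr_\e\dive\tilde\vu_\e$ and $\int_\O\psi\,\vr\dive\vu$ as transport terms involving $\nabla\psi$, chooses an explicit family $\psi_n\nearrow 1$ with $\|\nabla\psi_n\|_\infty\lesssim n$, and kills the resulting boundary layer integrals \eqref{est-slogs1}--\eqref{est-slogs2} by Hardy's inequality $[d(x,\d\O)]^{-1}\vu\in L^2(\O)$. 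You instead exploit that the renormalized equations \eqref{eq-renomal}, \eqref{eq-renomal-lim} hold in $\mathcal{D}'(\R^3)$ for the zero extensions: testing against $\psi\in C_c^\infty(\R^3)$ with $\psi\equiv 1$ on a neighborhood of $\Ov{\O}$ kills the transport term outright (since $\tilde\vu_\e$, $\vu$ vanish outside $\O$ while $\nabla\psi$ vanishes inside a neighborhood of $\Ov{\O}$), giving $\int_\O\tilde\vr_\e\dive\tilde\vu_\e\,\dx=\int_\O\vr\dive\vu\,\dx=0$ directly; you then promote Lemma~\ref{lem:flux} from $\psi\in C_c^\infty(\O)$ to $\psi\equiv 1$ by the standard $L^q$-density argument, since both $(p(\tilde\vr_\e)-(\tfrac{4\mu}{3}+\eta)\dive\tilde\vu_\e)\tilde\vr_\e$ and $(\overline{p(\vr)}-(\tfrac{4\mu}{3}+\eta)\dive\vu)\vr$ are uniformly bounded in $L^q(\O)$ for some $q>1$ (this is where $\g>2$ enters). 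This buys you a cleaner route that avoids Hardy's inequality entirely; the paper's route is slightly more involved but works even when one only has the renormalized equation in $\mathcal{D}'(\O)$ rather than $\mathcal{D}'(\R^3)$. One small imprecision worth noting: you justify the density step by ``$L^{(3\g-3)/(\g+1)}$ integrability of all terms,'' but the convective/pressure product $p(\tilde\vr_\e)\tilde\vr_\e$ and the viscous product $\dive\tilde\vu_\e\,\tilde\vr_\e$ actually land in slightly different Lebesgue spaces ($L^{(3\g-3)/(\g+1)}$ and $L^{6(\g-1)/(3\g-1)}$ respectively); what matters is only that both exponents exceed $1$ for $\g>2$, which they do.
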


\begin{proof}[Proof of {\rm Lemma \ref{lem:flux2}}]
First of all, we have
$$ (3\g-3)-(\g+1)=2\g -4 >0 . $$
Then by \eqref{est-r-u-tilde}, we have
$$
p(\tilde \vr_\e)\tilde \vr_\e \to \overline {p(\vr)\vr} \quad \mbox{weakly in}\quad L^{\frac{3\g-3}{\g+1}}(\O).
$$

Taking $b(s)=s\log s$ in the renormalized equations \eqref{eq-renomal} and \eqref{eq-renomal-lim} implies
\be\label{eq:slogs}
\dive\big((\tilde\vr_\e\log \tilde\vr_\e) \tilde\vu_\e\big)+\tilde\vr_\e \dive \tilde\vu_\e=0,\quad \dive\big((\vr\log \vr) \vu\big)+\vr \dive \vu=0,\ \mbox{in}\ \mathcal{D}'(\O).
\ee
Passing $\e\to 0$ in the first equation of \eqref{eq:slogs} gives
\be\label{eq:slogs1}
\dive\big(\overline{(\vr\log \vr)}\, \vu\big)+\overline{\vr \dive \vu}=0, \ \mbox{in}\ \mathcal{D}'(\O),
\ee
where we used the strong convergence of the velocity in \eqref{st-con-vue} and
\ba\label{eq:slogs2}
&\tilde\vr_\e\log \tilde\vr_\e \to \overline{\vr\log \vr} \quad \mbox{weakly in}\quad L^{q}(\O)\ \mbox{for any $q<3\g-3$},\\
&\tilde\vr_\e \dive \tilde\vu_\e \to \overline{\vr \dive \vu} \quad \mbox{weakly in}\quad L^{\frac{6\g-6}{3\g-1}}(\O).
\ea

Then for any $\psi\in C_c^\infty(\Omega)$, \eqref{eq:slogs1} and \eqref{eq:slogs2} implies
\be\label{flux1}
\lim_{\e\to 0}\intO{\psi \left(p(\tilde\vr_\e)-(\frac{4\mu}{3}+\eta)\dive \tilde\vu_\e\right)\tilde\vr_\e}=\intO{\psi \overline {p(\vr)\vr} -(\frac{4\mu}{3}+\eta) \overline{(\vr\log \vr)}\, \vu \cdot \nabla \psi}.
\ee

By the second equation in \eqref{eq:slogs}, we obtain
\be\label{flux2}
\intO{\psi\left(\overline{p(\vr)}-(\frac{4\mu}{3}+\eta)\dive \vu\right)\vr}=\intO{\psi \overline{p(\vr)}\vr -(\frac{4\mu}{3}+\eta) (\vr\log \vr)\, \vu \cdot \nabla \psi}.
\ee

Let $\{\psi_n\}_{n\in \Z_+} \subset C_c^\infty(\Omega)$ such that
$$
 \psi_n(x)=0 \ \mbox{if} \ d(x,\partial \Omega)<\frac{1}{n},\quad \psi_n(x)=1 \ \mbox{if} \ d(x,\partial \O)>\frac{2}{n},\quad \|\nabla  \psi_n\|_{L^\infty(\O; R^3)}\leq 4n.
$$
Then for any $q\in [1,\infty]$:
\[
\|1-\psi_n\|_{L^q(\O)}\leq C\, n^{-\frac{1}{q}},\quad \|\nabla \psi_n\|_{L^q(\O; R^3)}\leq C\, n^{1-\frac{1}{q}},
\]
and consequently
\[
\|d(x,\d \O)\nabla \psi_n\|_{L^q; R^3}\leq C\, n^{-\frac{1}{q}}.
\]

The fact $\vu\in W_0^{1,2}(\O; R^3)$ implies
\[
[d(x,\partial \O)]^{-1} \vu \in L^2(\O; R^3).
\]
Therefore,
\ba\label{est-slogs1}
&\intO{\nabla \psi_n \cdot \overline{(\vr\log\vr)}\vu} \\
&\leq \|d(x,\partial \O)\nabla \psi_n\|_{L^{10}(\O; R^3)}\|\overline{(\vr\log\vr)}\|_{L^{5/2}(\O)}\|[d(x,\partial \Omega)]^{-1}\vu\|_{L^2(\O; R^3)}\\
&\leq C\, \ n^{-1/10}.
\ea
Similarly,
\be\label{est-slogs2}
\intO{\nabla \psi_n \cdot (\vr\log\vr)\vu} \leq C\, \ n^{-1/10}.
\ee

We choose $\psi=\psi_n$ in \eqref{flux} and pass to the limit $n\to \infty$. By using \eqref{flux1}, \eqref{flux2}, \eqref{est-slogs1} and \eqref{est-slogs2}, we deduce
\be\label{flux-f}
\intO{\overline{p(\vr)\vr}-\overline{p(\vr)}\vr}=0.
\ee

By the strict monotonicity of the mapping $\vr \mapsto p(\vr)$, applying Theorem 10.19 in \cite{F-N-book} or Lemma 3.35 in \cite{N-book} implies
\[
\overline{p(\vr)\vr} \geq \overline{p(\vr)}\vr,\quad \mbox{a.e. in}\quad \O.
\]
Together with \eqref{flux-f}, we deduce
\[
\overline{p(\vr)\vr} = \overline{p(\vr)}\vr,\quad \mbox{a.e. in}\quad \O.
\]

We have completed the proof of Lemma \ref{lem:flux2}.

\end{proof}

Thanks to the monotonicity of $p(\cdot)$, again by Theorem 10.19 in \cite{F-N-book}, we obtain $\overline{p(\vr)}=p(\vr)$. Hence, we complete the proof of Theorem \ref{thm-hom}.

For the convenience of readers, we recall Theorem 10.19 in \cite{F-N-book}: Let $I\subset R$ be an interval, $Q\subset R^d$ be a domain, $P$ and $G$ be non-decreasing functions in $C(I)$. Let $\{\vr_n\}_{n\in \N}$ be a sequence in $L^1(Q;I)$ such that
$$
P(\vr_n) \to \overline{P(\vr)}, \quad G(\vr_n) \to \overline{G(\vr)}, \quad P(\vr_n)G(\vr_n) \to \overline{P(\vr)G(\vr)}, \quad \mbox{weakly in $L^1(Q)$}.
$$
Then the following properties hold:
\begin{itemize}

\item[(i).] $\overline{P(\vr)}\ \overline{G(\vr)}\leq \overline{P(\vr)G(\vr)}.$
\item[(ii).] If, in addition, $P\in C(R), \ G\in C(R),\ G(R)=R$, $G$ is strictly increasing, and $\overline{P(\vr)}\ \overline{G(\vr)} = \overline{P(\vr)G(\vr)}$, then $\overline{P(\vr)}=P\circ G^{-1} \overline{G(\vr)} $.  If, in particular, $G(z)=z$ be the identity function, there holds $\overline{P(\vr)}=P(\vr)$.

\end{itemize}

\section{Conclusions and perspectives}

In this paper, we constructed an inverse of the divergence operator in a domain perforated with tiny holes and we showed the precise and optimal dependency on the size of the holes for the norm of this inverse operator; in particular, under some smallness constrain, this inverse of the divergence operator is uniformly bounded.  We apply such an operator in the study of homogenization problems for stationary compressible Navier-Stokes system. Under some constrain (see \eqref{res-g-a}) between the adiabatic exponent and the size of the holes, we show that the homogenization process does not change the motion of the fluids: in the limit, we obtain again compressible Navier-Stokes equations.

Here we focus on the case where the holes are very small, corresponding to $\a>3$. It is also known that if $\a=1$, one can recover Darcy's law from the homogenization. However, the case with $1<\a\leq 3$, in particular the critical case $\a=3$ is still open.

\section*{Acknowledgements}
\thispagestyle{empty}

Eduard Feireisl and Yong Lu acknowledges the support of the project LL1202 in the programme ERC-CZ funded by the Ministry of Education, Youth and Sports of the Czech Republic.


\end{document}